\newtheorem{theorem}{Theorem}[section]
\newtheorem{lemma}[theorem]{Lemma}
\theoremstyle{definition}
\newtheorem{proposition}[theorem]{Proposition}
\newtheorem{corollary}[theorem]{Corollary}
\theoremstyle{remark}
\newtheorem{remark}[theorem]{Remark}
\numberwithin{equation}{section}
\DeclareMathOperator{\dv}{div}
\DeclareMathOperator{\grad}{grad}
\DeclareMathOperator{\Ric}{Ric}
\DeclareMathOperator{\inj}{Inj}
\DeclareMathOperator{\dist}{dist}
\DeclareMathOperator{\dvol}{\,dvol}
\DeclareMathOperator{\dS}{\,dS}
\begin{document}

\title{Eigenvalues on Spherically Symmetric Manifolds}

\author[S.~M.~Berge]{Stine Marie Berge}
\address{Institute for Analysis, Leibniz University Hannover, Welfengarten 1 30167 Hannover, Germany}
\email{stine.berge@math.uni-hannover.de}



\date{}

\dedicatory{}

\commby{}
\begin{abstract}
In this article we will explore Dirichlet Laplace eigenvalues on balls on spherically symmetric manifolds. We will compare any Dirichlet Laplace eigenvalue with the corresponding Dirichlet Laplace eigenvalue on balls in Euclidean space with the same radius. As a special case we will show that the Dirichlet Laplace eigenvalues on balls with small radius on the sphere are smaller than the corresponding eigenvalues on the Euclidean ball with the same radius. While the opposite is true for the Dirichlet Laplace eigenvalues of hyperbolic spaces.
\end{abstract}

\maketitle


\section{Introduction} 
It is well know that the Dirichlet Laplace eigenvalues of the ball $B_{r_0}(0)\subset \mathbb{R}^n$ with radius $r_0$ are on the form 
\[\left(\frac{j^l_{m+n/2-1}}{r_0}\right)^2,\]
where $j^l_{m+n/2-1}$ is the $l$'th zero of the Bessel function $J_{m+n/2-1}$.
In \cite{Ba90, Ba91} Baginski compared all the Dirichlet Laplace eigenvalues on the spherical cap $B_{r_0}(p)\subset \mathbb{S}^{2}$ to the Dirichlet Laplace eigenvalues of the ball in the plane with the same radius. In particular, the author showed that the first eigenvalue $\lambda_1(r_0)$ on $B_{r_0}(p)\subset \mathbb{S}^2$ satisfies
\begin{equation}\label{eq:first_eigenvalue}
\left(\frac{j_0^1}{r_0}\right)^2-\frac{1}{4}\left(1+\frac{1}{\sin^2(r_0)}-\frac{1}{r_0^2}\right)\le \lambda_1(r_0)\le \left(\frac{j_0^1}{r_0}\right)^2-\frac{1}{3}. 
\end{equation}
One of the tools used in the proof are comparison theorems for Sturm-Liouville equations on the radial part of the eigenfunction.

More recently, the first eigenvalue of a ball on a spherically symmetric, (also called rotationally symmetric) manifold was compared to the first eigenvalue of a ball in Euclidean space in \cite[Lem.~3.1]{BF17}. When applied to hyperbolic space and the sphere the result gives a generalization of \eqref{eq:first_eigenvalue} for the first eigenvalue found in \cite[Thm.~3.3]{BF17}. As a general consequence of this result, Borisov and Freitas showed that the first Dirichlet Laplace eigenvalue on balls on hyperbolic space are larger than the first eigenvalue on Euclidean space. Prior, a lower bound in the case of hyperbolic space for the first Dirichlet Laplace eigenvalue on balls has been shown by Artamoshin, see \cite{Ar16}. In the same article, the author also computed all the eigenvalues corresponding to radial eigenfunctions in $3$-dimensional hyperbolic space.

In this article, we will present two different proofs of similar inequalities to \eqref{eq:first_eigenvalue} for all eigenvalues for both the spheres and hyperbolic spaces. Both the proofs will heavily dependent on the existence of certain continuous families of eigenfunctions and corresponding eigenvalues. The construction of these continuous families will for constant curvature spaces be outlined in Section  \ref{sec:model_space_eigenvalues}, and generalized to spherically symmetric manifolds in Section \ref{sec:Spherical Symmetric Manifolds}. This parametrized family of Dirichlet eigenvalues on the geodesic ball $B_r(p)$ with radius $r$ centered at $p$ will be denoted by $\lambda_{m,l}(r)$ and satisfies 
\[\lim_{r\to 0}r^2\lambda_{m,l}(r)=(j_{m+n/2-1}^l)^2.\]
We will show that these families of eigenvalues satisfies a lower and upper bound similar to \eqref{eq:first_eigenvalue}.
As a corollary, we will show that for every eigenvalue there exists a continuous family of Dirichlet Laplace eigenvalues $\lambda_{m,l}(r)$  on hyperbolic space with curvature $K$ on the geodesic ball $B_r(p)$, such that \[\lim_{r\to \infty}\lambda_{m,l}(r)=-\left(\frac{n-1}{2}\right)^2K.\] For the first eigenvalue corresponding to the case $m=0$ and $l=1$ this was previously shown by Randol, see \cite[p.~46]{Ch84}.

The outline of the paper is as follows: In Section \ref{sec:model_space_eigenvalues} we will prove a version of \eqref{eq:first_eigenvalue} for all eigenvalues using Sturm-Liouville theory as outlined in \cite{Ba90,Ba91}. We will go through the definition and some theory about spherical symmetric manifolds in Section \ref{sec:spherically_symmetric}. Of special importance for this paper, is representing eigenfunctions of balls on spherically symmetric manifolds using spherical harmonics. The goal of Section \ref{sec:eigenvalues_balls} will be to express the eigenvalues as zeros of radial functions satisfying a Sturm-Liouville equation. 

In the last section, we will write the an family of eigenvalue as an integral of the corresponding eigenfunction. To be more precise, let $u^t$ be the normalized continuous family of Dirichlet Laplace eigenfunction satisfying $\Delta u^t=\lambda_{m,l}(t)u^t$ on $B_t(p)$ in a spherically symmetric manifold of dimension $n$. Using this notation we the eigenvalue $\lambda_{m,l}(r_1)$ can be written as the integral
   \begin{equation}\label{eq:radial eigenvalue eq}
   \lambda_{m,l}(r_1)=\left(\frac{j^l_{m+n/2-1}}{r_1}\right)^2+\frac{1}{2r_1^2}\int_{0}^{r_1}t\int_{B_{t}(p)}(u^t)^{2}(x)g(r(x))\dvol \mathrm{dt},
   \end{equation}
where $g$ is a radial function only depending on the geometry. For the eigenvalue $\lambda_{0,1}(r)$ this result can be found in \cite[Lem.~3.1]{BF17}. As an application of formula \eqref{eq:radial eigenvalue eq} we give another proof of the theorem presented in Section \ref{sec:model_space_eigenvalues} for eigenvalues on spheres and hyperbolic space. 

\section{Dirichlet Laplace Eigenvalues on Model Spaces}\label{sec:model_space_eigenvalues}
In this section we will work on the model space $(M_K, \mathbf{g}_K)$ with constant sectional curvature $K$ and dimension $n$. Denote by $\sin_K(r)$ the function 
\[\sin_K(r)=
\begin{cases}
   \sin(r\sqrt{K})/\sqrt{K} & \text{for } K>0\\
   r & \text{for } K=0\\
   \sinh(r\sqrt{-K})/\sqrt{-K} & \text{for } K<0,
\end{cases}
   \]
   and $\cos_K(r)=\sin_K(r)'$.
   For a fixed point $p\in M_K$ denote by $r_p(x)=\dist(p,x)$ the radial distance function and let $\mathbb{S}^{n-1}_1\subset \mathbb{R}^{n}$ denote the $(n-1)$-sphere of radius $1$. To represent points in $M_K$ we will use geodesic spherical coordinates $(r,\theta)$ where $\theta\in \mathbb{S}_1^{n-1}$. Geodesic coordinates are defined by using the exponential map, see \cite[Sec.~III.1.]{Ch06}. Define $B_{r_0}(p)=r_p^{-1}([0,r_0))$ and $S_{r_0}^{n-1}(p)=\partial B_{r_0}(p)$.

   In Section \ref{sec:spherically_symmetric} we will show that the solutions to the Dirichlet Laplace eigenvalue problem 
\begin{equation}\label{eq:Dirichlet_modelspaces}
\begin{cases}
   \Delta u^{r_0}+\lambda(r_0) u^{r_0}=0&\text{in }B_{r_0}(p)\text{ for $p\in M_K$,}\\
   u^{r_0}=0&\text{at }S_{r_0}^{n-1}(p)
\end{cases}
\end{equation}
that are given on the form $u^{r_0}(r,\theta)=R_m^{r_0}(r)\Theta_m(\theta)$ form a basis for $L^2(B_{r_0}(p))$. Using separation of variables we can show that $\Theta_m$ is a spherical harmonic function solving \[\Delta_{\mathbb{S}_1^{n-1}}\Theta_m+m(m+n-2)\Theta_m=0,\]
where $m\in \mathbb{N}$. The radial part $R_m^{r_0}$ on the other hand solves the equation 
   \begin{multline}\label{eq:radial_spherical}
      R_m^{r_0}(r)''+ (n-1)\frac{\cos_K(r) }{\sin_K(r)}R_m^{r_0}(r)'
       - \frac{m(m+n-2)}{\sin_K^2(r)}R_m^{r_0}(r)=-\lambda(r_0) R_m^{r_0}(r)
   \end{multline}
   with the condition $R_m^{r_0}(r_0)=0$ and $R_m^{r_0}(0)$ is bounded. Notice that \eqref{eq:radial_spherical} makes $R_m^{r_0}$ to an eigenfunction of the second order differential operator      
   \[ \frac{d^2}{dr^2}+ (n-1)\frac{\cos_K(r) }{\sin_K(r)}\frac{d}{dr} - \frac{m(m+n-2)}{\sin_K^2(r)}.\]
   Denote by $R_{m,l}^{r_0}$ the eigenfunction to \eqref{eq:radial_spherical} corresponding to the $l$'th eigenvalue when ordered by size. The corresponding eigenvalue will be denoted by $\lambda_{m,l}(r_0)$.
   It is known, see e.g.\ \cite[p.~318]{Ch84}, that 
   \[\lim_{r_0\to 0}r_0^2\lambda_{m,l}(r_0)=(j^l_{m+n/2-1})^2,\]
   where $j^l_{m+n/2-1}$ is the $l$'th positive zero of the Bessel function $J_{m+n/2-1}$.

We are now ready to state the main theorem:
\begin{theorem}\label{thm:radial:model:space}
   Let $\lambda_{m,l}(r_0)$ be defined as above. Furthermore, when $K>0$ we will assume that $r_0<\pi/\sqrt{K}$. In the case that either $m>0$ or $n>2$ we have that 
   \begin{align*}
      &\left(\frac{j^l_{m+n/2-1}}{r_0}\right)^2+\frac{2m^2+2m(n-2)-n(n-1)}{6}K\le \lambda_{m,l}(r_0)\\
      &\qquad\qquad\le \left(\frac{j^l_{m+n/2-1}}{r_0}\right)^2-\left(\frac{n-1}{2}\right)^2K+ \left(\left(\frac{n-2}{2}+m\right)^2-\frac{1}{4}\right)\left(\frac{1}{\sin_K^2(r_0)}-\frac{1}{r_0^2}\right).
   \end{align*}
   When $m=0$ and $n=2$ we get 
   \[\left(\frac{j^l_{0}}{r_0}\right)^2-\frac{1}{4}\left(K+\frac{1}{\sin_K^2(r_0)}-\frac{1}{r_0^2}\right)\le \lambda_{0,l}(r_0)\le \left(\frac{j^l_{0}}{r_0}\right)^2-\frac{1}{3}K.\]
\end{theorem}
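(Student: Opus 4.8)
The plan is to put the singular radial equation \eqref{eq:radial_spherical} into Liouville normal form and then run a Sturm--Liouville comparison. Concretely, I would kill the first-order term by the substitution $R_m^{r_0}(r) = \sin_K^{-(n-1)/2}(r)\,\psi(r)$, which is the natural Liouville transformation since $\sin_K^{n-1}(r)$ is the weight of the symmetric form of the operator. Using $\cos_K' = -K\sin_K$ together with $\cos_K^2 + K\sin_K^2 = 1$, a direct computation converts \eqref{eq:radial_spherical} into the Schr\"odinger form
\[
\psi''(r) + \bigl(\lambda(r_0) - q(r)\bigr)\psi(r) = 0, \qquad q(r) = \Bigl(\bigl(m+\tfrac{n-2}{2}\bigr)^2 - \tfrac14\Bigr)\frac{1}{\sin_K^2(r)} - \Bigl(\frac{n-1}{2}\Bigr)^2 K,
\]
with $\psi(r_0)=0$ and the behavior at $0$ inherited from boundedness of $R_m^{r_0}$. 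The point of this normalization is that the coefficient of $\sin_K^{-2}(r)$ is exactly $\nu^2 - \tfrac14$, where $\nu := m + n/2 - 1$ is the order of the Bessel function governing the Euclidean problem.

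Next I would isolate the Euclidean comparison operator. For $K=0$ one has $\sin_K(r)=r$, the potential becomes $q_0(r) = (\nu^2-\tfrac14)/r^2$, and its Dirichlet eigenvalues on $(0,r_0)$ are precisely $(j^l_{m+n/2-1}/r_0)^2$, with normalized eigenfunctions $\sqrt r\,J_\nu(j^l_{m+n/2-1}\,r/r_0)$. Writing $f(r) := \sin_K^{-2}(r) - r^{-2}$, the curvature correction is the \emph{bounded} function
\[
q(r) - q_0(r) = \Bigl(\bigl(m+\tfrac{n-2}{2}\bigr)^2 - \tfrac14\Bigr) f(r) - \Bigl(\frac{n-1}{2}\Bigr)^2 K.
\]
Since $q - q_0$ is bounded on $(0,r_0)$, the two operators share the same form domain, so the min--max principle (equivalently the Sturm comparison theorem of \cite{Ba90,Ba91}) gives, for every $l$,
\[
\Bigl(\tfrac{j^l_{m+n/2-1}}{r_0}\Bigr)^2 + \inf_{(0,r_0)}(q-q_0) \le \lambda_{m,l}(r_0) \le \Bigl(\tfrac{j^l_{m+n/2-1}}{r_0}\Bigr)^2 + \sup_{(0,r_0)}(q-q_0),
\]
using that adding a constant to the potential shifts every eigenvalue by that constant.

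It then remains to compute the extrema of $q - q_0$, which reduces to controlling $f$. I would show that $f$ is monotonically increasing on its interval of definition, so that $\inf f = \lim_{r\to 0} f(r) = K/3$ and $\sup f = f(r_0) = \sin_K^{-2}(r_0) - r_0^{-2}$. The limit follows from $\sin_K(r) = r - Kr^3/6 + O(r^5)$, and monotonicity reduces, after substituting $u = r\sqrt{K}$, to the elementary inequality $(\sin u/u)^3 > \cos u$ on $(0,\pi)$ (and is automatic where $\cos_K \le 0$). When $m>0$ or $n>2$ the coefficient $\nu^2-\tfrac14 \ge 0$, so the infimum of $q-q_0$ uses $f \ge K/3$ and the supremum uses $f \le f(r_0)$; simplifying the constant $-\bigl(\tfrac{n-1}{2}\bigr)^2K + (\nu^2-\tfrac14)\tfrac K3$ into $\tfrac{2m^2+2m(n-2)-n(n-1)}{6}K$ then yields both stated bounds. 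When $m=0$ and $n=2$ the coefficient equals $-\tfrac14$, which interchanges the roles of the two extrema of $f$ and produces the second pair of bounds.

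The main obstacle I anticipate is twofold. First, the monotonicity of $f$ — that its minimum is attained in the limit $r\to 0$ and its maximum at $r_0$ — is the one genuinely nontrivial analytic input, and it must hold uniformly throughout the admissible range (for $K>0$ all the way up to $r_0<\pi/\sqrt K$). Second, because $q$ is singular at $r=0$ whenever $\nu \ne \tfrac12$, one must justify the comparison at the singular endpoint: I would work with the Friedrichs extension and use that the bounded perturbation $q-q_0$ leaves the form domain unchanged, so that the min--max eigenvalue comparison is legitimate.
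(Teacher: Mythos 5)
Your reduction is exactly the paper's: the substitution $\psi=\sin_K^{(n-1)/2}R$ is the paper's $v_K$, your potential $q$ reproduces equation \eqref{eq:curvature_comp}, the Euclidean comparison problem is \eqref{eq:thm_bessel}, and the whole estimate hinges, as in the paper, on the monotonicity of $\frac{1}{\sin_K^2(r)}-\frac{1}{r^2}$ with limit $K/3$ at the origin and the sign flip of $\tilde m^2-\tfrac14$ when $m=0$, $n=2$; your final algebra checks out. The one genuine difference is the last step: the paper applies the Sturm--Picone theorem to the zeros of $v_K$ and $\sqrt r J_{\tilde m}(C_ir)$, which forces it to verify that the constants $C_1,C_2$ are real --- hence the auxiliary a priori bounds $\lambda_{m,l}(r_0)>m(m+n-2)/\sin_K^2(r_0)$ for $K>0$ and $\lambda_{m,l}(r_0)>-\frac{(n-1)^2}{4}K$ for $K<0$, and the restriction $r_0<\pi/(2\sqrt K)$ with the remaining range deferred to the integral-formula proof of Section \ref{ex:model_eigenvalues}. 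Your min--max formulation (eigenvalues shift monotonically under a bounded perturbation of the potential with unchanged form domain) needs none of this and covers the full range $r_0<\pi/\sqrt K$ in one pass, which is a real tidying-up; the price is that you must be careful about the self-adjoint realization at the singular endpoint $r=0$ (limit point for $\tilde m\ge 1$, Friedrichs extension in the limit-circle case $0\le\tilde m<1$, e.g.\ $n=2$, $m=0$), and you should state the hyperbolic analogue $(\sinh u/u)^3>\cosh u$ of your monotonicity inequality explicitly, since the $K<0$ case is the one where the upper bound is most used. With those two points filled in, your argument is complete and marginally cleaner than the paper's first proof.
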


\begin{remark}\hfill
   \begin{itemize}
      \item In the case of the sphere $\mathbb{S}^2$ this inequality is known from \cite{Ba90, Ba91}. Additionally, for the first eigenvalue the theorem is known from \cite[Thm.~3.3]{BF17}.
      \item When $K<0$ we have that \[  \frac{1}{\sin_K^2(r)}-\frac{1}{r^2}\le 0 \quad \text{ and } \quad \lim_{r\to \infty}\frac{1}{\sin_K^2(r)}-\frac{1}{r^2}=0.\]
      When $K > 0$ we have that \[\lim_{r\to \frac{\pi}{\sqrt{K}}}\frac{1}{\sin_K^2(r)}-\frac{1}{r^2}=\infty.\]
      For both positive and negative curvature 
      \[\lim_{r\to 0} \frac{1}{\sin_K^2(r)}-\frac{1}{r^2} = \frac{K}{3}.\]
      \item For $n=3$ and $m=0$ Theorem \ref{thm:radial:model:space} simplifies to  
   \begin{equation*}
   \lambda_{0,l}(r_0)= \left(\frac{j^l_{1/2}}{r_0}\right)^2-K.
   \end{equation*}
   For hyperbolic space this equality was shown in \cite{Ar16}.
   \end{itemize}
\end{remark}

To compare the eigenfunctions of \eqref{eq:radial_spherical} to the Bessel equation we are going to use the Sturm-Picone comparison theorem.
\begin{theorem}[Sturm-Picone Comparison Theorem, {\cite[Theorem B]{Hi05}}]\label{D:T:sturm-comparison}
	Let $y_1$ and $y_2$ be non-zero solutions to 
		\begin{align*}
			(p_1(x)y_1'(x))'+q_1(x)y_1(x)&=0, \\
			(p_2(x)y_2'(x))'+q_2(x)y_2(x)&=0,
		\end{align*}
		on the interval $[a,b]$.
		Assume that $0<p_2\le p_1$ and $q_1\le q_2$, and let $z_1$ and $z_2$ be two consecutive zeros of $y_1$. Then either $y_2$ has a zero in the interval $(z_1,z_2)$, or $y_1=y_2$.
\end{theorem}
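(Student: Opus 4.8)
The plan is to prove this classical comparison theorem through \emph{Picone's identity}, which collapses the two differential equations into a single pointwise identity whose sign is dictated by the hypotheses. Concretely, on any subinterval where $y_2\neq 0$ I would introduce the auxiliary function
\[
\Phi=\frac{y_1}{y_2}\bigl(p_2\,y_1 y_2'-p_1\,y_1' y_2\bigr)=\frac{p_2\,y_1^2\,y_2'}{y_2}-p_1\,y_1\,y_1',
\]
differentiate it, and substitute $(p_1 y_1')'=-q_1 y_1$ and $(p_2 y_2')'=-q_2 y_2$ from the two equations. After regrouping the cross terms into a perfect square, this produces the Picone identity
\[
\Phi'=(q_1-q_2)\,y_1^2+(p_2-p_1)\,(y_1')^2-p_2\left(y_1'-\frac{y_1\,y_2'}{y_2}\right)^2.
\]
The algebra is routine but must be arranged carefully so that the final term comes out as an exact square with factor $p_2$.

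The structural point is that, under the hypotheses $0<p_2\le p_1$ and $q_1\le q_2$, every summand on the right is nonpositive: $(q_1-q_2)y_1^2\le 0$, $(p_2-p_1)(y_1')^2\le 0$, and the squared term is multiplied by $-p_2<0$. Hence $\Phi'\le 0$ wherever $y_2$ does not vanish. I would then argue by contradiction: assume $y_2$ has no zero in $(z_1,z_2)$, so that $\Phi$ is defined throughout this interval. Integrating the identity gives
\[
\Phi(z_2^-)-\Phi(z_1^+)=\int_{z_1}^{z_2}\Phi'\,dx\le 0,
\]
so once the boundary terms are shown to vanish, a nonpositive integrand with zero integral must satisfy $\Phi'\equiv 0$.

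The main obstacle is the boundary behaviour of $\Phi$ at $z_1$ and $z_2$. Since $y_1$ vanishes there, if $y_2$ is nonzero at the endpoint then $\Phi\to 0$ immediately; the delicate case is when $y_2$ also vanishes at an endpoint, making $y_1^2/y_2$ an indeterminate form. Here I would invoke the fact that nontrivial solutions of a second-order linear equation have only simple zeros, so near a common zero $z$ one has $y_1\sim a(x-z)$ and $y_2\sim b(x-z)$ with $b\neq 0$; consequently $y_1^2/y_2\to 0$ while $y_1 y_2'/y_2$ stays bounded, which yields both $\Phi\to 0$ and integrability of $\Phi'$ up to the endpoint. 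With the boundary terms disposed of, $\Phi'\equiv 0$ forces each nonpositive summand to vanish separately. In particular $y_1' y_2-y_1 y_2'=0$, i.e.\ $(y_1/y_2)'=0$, so $y_1$ is a constant multiple of $y_2$, while the vanishing of the remaining terms gives $p_1=p_2$ and $q_1=q_2$ wherever $y_1,y_1'\neq 0$; by uniqueness of solutions this forces $y_1=y_2$ up to normalization, which is precisely the exceptional alternative in the statement. This establishes the required dichotomy.
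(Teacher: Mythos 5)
The paper offers no proof of this statement: it is quoted directly from Hinton's survey \cite[Theorem B]{Hi05} and used as a black box in the proof of Theorem 2.1, so there is nothing of the paper's to compare your argument against. Your Picone-identity proof is the standard classical argument, and it is essentially correct. The identity checks out: with
\[
\Phi=\frac{p_2\,y_1^2\,y_2'}{y_2}-p_1\,y_1\,y_1',
\qquad
\Phi'=(q_1-q_2)\,y_1^2+(p_2-p_1)\,(y_1')^2-p_2\left(y_1'-\frac{y_1\,y_2'}{y_2}\right)^2,
\]
each summand is nonpositive under the hypotheses, and your handling of the one genuinely delicate point --- the boundary behaviour of $\Phi$ when $y_2$ vanishes at $z_1$ or $z_2$ --- is the right one: simplicity of zeros of nontrivial solutions gives $y_1^2/y_2\to 0$ while $p_2y_2'$ and $p_1y_1y_1'$ stay bounded (indeed tend to limits making $\Phi\to 0$), so both endpoint limits of $\Phi$ vanish and the monotonicity argument forces $\Phi'\equiv 0$ on $(z_1,z_2)$.

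One step at the end is loose, though it reflects a looseness already present in the quoted statement. From $\Phi'\equiv 0$ you correctly extract the vanishing Wronskian, hence $y_1=c\,y_2$ on $(z_1,z_2)$ with $c\neq 0$, together with $q_1=q_2$ on all of $(z_1,z_2)$ (there $y_1\neq 0$) and $p_1=p_2$ wherever $y_1'\neq 0$. But the literal conclusion ``$y_1=y_2$'' cannot be deduced from this, and in fact cannot be true as written: replacing $y_2$ by $2y_2$ changes neither the hypotheses nor the location of zeros. The second alternative of the theorem must be read as ``$y_2$ is a constant multiple of $y_1$ (and the two equations coincide on the interval)'', which is exactly what your argument delivers; the appeal to ``uniqueness of solutions up to normalization'' is just this rephrasing and should be stated as such rather than as an extra deduction. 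With that reading, your proof is complete, and note that this weaker dichotomy is all that the paper actually uses: in the proof of Theorem 2.1 only the zero-interlacing conclusion enters, via the bounds $j^l_{\tilde m}/C_2\le r_0\le j^l_{\tilde m}/C_1$.
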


\begin{proof}[Proof Thm.~\ref{thm:radial:model:space}]
In this section we will assume that for $K>0$ we have $r_0 <\frac{\pi}{2\sqrt{|K|}}$. For the case when $r_0 \ge\frac{\pi}{2\sqrt{|K|}}$ see the proof in Section \ref{ex:model_eigenvalues}.

To get \eqref{eq:radial_spherical} on Sturm-Liouville form we will define
\[v_K(r)\coloneqq \sin_{K}^{\frac{n-1}{2}}(r)R_{m,l}^{r_0}(r),\]
and let $\tilde{m}\coloneqq\frac{n-2}{2}+m$.
Then we have that 
\begin{equation}\label{eq:curvature_comp}
    0=v_K''(r)+ \left(\left(\frac{n-1}{2}\right)^2K+\lambda_{m,l}(r_0)+\frac{1-4\tilde{m}^2}{4\sin_K^2(r)}\right)v_K(r).
\end{equation}
Let $J_{\tilde{m}}$ be a Bessel function of order $\tilde{m}$'th, and define $u_{C}(r)\coloneqq\sqrt{r}J_{\tilde{m}}(Cr)$ to be the solution to 
\begin{equation}\label{eq:thm_bessel}
0=u_{C}''(r)+\left(C^2+\frac{1-4\tilde{m}^2}{4r^2}\right)u_{C}(r).
\end{equation}
Our goal is to apply Sturm-Picone comparison theorem (Thm.~\ref{D:T:sturm-comparison}) to \eqref{eq:curvature_comp} and \eqref{eq:thm_bessel}.
We will use Sturm-Picone comparison twice with the $C$ in \eqref{eq:thm_bessel} being two different constants which we will denote by $C_1$ and $C_2$.

Notice that $\frac{1}{\sin_{K}^2(r)}-\frac{1}{r^2}$ is increasing with the lower limit 
\[\lim_{r\to 0^+}\frac{1}{\sin_{K}^2(r)}-\frac{1}{r^2}=\frac{K}{3}.\]
 Denote by 
\[
a_{\tilde{m}}\coloneqq \frac{4\tilde{m}^2-1}{4}\left(\frac{1}{\sin_{K}^2(r_0)}-\frac{1}{r_0^2}\right),\qquad b_{\tilde{m}}\coloneqq \frac{K(4\tilde{m}^2-1)}{12}.\]
For the next part we will need a lower bound for the eigenvalue.
We will show that for $K>0$ and $m>0$  we have that
\begin{equation}\label{eq:eigenvalue}
\lambda_{m,l}(r_0)>\frac{m(m+n-2)}{\sin_K^2(r_0)}.
\end{equation}
When $m>0$ we have that the first zero $r_0$ of $R_{m,l}^{r_0}$ is occurs after the first extremal point $r_{\mathrm{max}}<r_0$. Without loss of generality, we can assume that the first extremal point is a maximum. At the point $r_{\mathrm{max}}$ we have that ${R_{m,l}^{r_0}}''(r_{\mathrm{max}})<0$ and ${R_{m,l}^{r_0}}'(r_{\mathrm{max}})=0$, which by using \eqref{eq:radial_spherical} implies
   \begin{multline*}
      (\lambda_{m,l}(r_0) \sin_K^2(r_0) - m(m+n-2))R_{m,l}^{r_0}(r)\\>(\lambda_{m,l}(r_0) \sin_K^2(r) - m(m+n-2))R_{m,l}^{r_0}(r)>0,
   \end{multline*}
   hence \eqref{eq:eigenvalue} follows.
For the negatively curved spaces we will use that 
\[-\frac{(n-1)^2}{4}K< \lambda_{m,l}(r_0)\]
which can be found e.g.~\cite[p.~46]{Ch84}.

When either $n>2$ or $m>0$ we have that $1-4\tilde{m}^2\le 0$, and we will set 
\[C_1=\sqrt{\lambda_{m,l}(r_0)+\left( \frac{n-1}{2} \right)^2K-a_{\tilde{m}}}\]
and 
\[C_2=\sqrt{\lambda_{m,l}(r_0)+\left( \frac{n-1}{2} \right)^2K-b_{\tilde{m}}}.\]
In this case we have that 
\[C_1^2 +\frac{1-4\tilde{m}^2}{4r^2}\le\left(\frac{n-1}{2}\right)^2K+\lambda_{m,l}(r_0)+\frac{1-4\tilde{m}^2}{4\sin_K^2(r)}\le C_2^2 +\frac{1-4\tilde{m}^2}{4r^2}.\]
Using Theorem \ref{D:T:sturm-comparison} for solutions to the equations \eqref{eq:curvature_comp} and \eqref{eq:thm_bessel} leads to the estimate of $r_0$ by 
\[\frac{j^l_{\tilde{m}}}{C_2}\le r_0\le \frac{j^l_{\tilde{m}}}{C_1}.\]
Solving for $\lambda_{m,l}(r_0)$ we get 
\begin{align*}
&\left(\frac{j^l_{\tilde{m}}}{r_0}\right)^2+\frac{2m^2+2m(n-2)-n(n-1)}{6}K\le \lambda_{m,l}(r_0)\\
&\qquad\qquad\le \left(\frac{j^l_{\tilde{m}}}{r_0}\right)^2-\left(\frac{n-1}{2}\right)^2K+ \frac{4\tilde{m}^2-1}{4}\left(\frac{1}{\sin_K^2(r_0)}-\frac{1}{r_0^2}\right).
\end{align*}

When $m=0$ and $n=2$ we have that 
\[C_2^2 +\frac{1-4\tilde{m}^2}{4r^2}\le\left(\frac{n-1}{2}\right)^2K+\lambda_{0,l}(r_0)+\frac{1-4\tilde{m}^2}{4\sin_K^2(r)}\le C_1^2 +\frac{1-4\tilde{m}^2}{4r^2}.\]
Hence we obtain 
\[\left(\frac{j^l_{0}}{r_0}\right)^2-\frac{1}{4}\left(K+\frac{1}{\sin_K^2(r_0)}-\frac{1}{r_0^2}\right)\le \lambda_{0,l}(r_0)\le \left(\frac{j^l_{0}}{r_0}\right)^2-\frac{1}{3}K.\qedhere\]
\end{proof}

A corollary of Theorem \ref{thm:radial:model:space} is as follows:
\begin{corollary}\label{cor:D:2}
Assume that $K<0$. Then in the notation of Theorem \ref{thm:radial:model:space} we have that
\begin{equation}\label{eq:limit}
\lim_{r_0\to \infty}\lambda_{m,l}(r_0)= -\left(\frac{n-1}{2}\right)^2K.
\end{equation}
\end{corollary}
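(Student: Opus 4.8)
The plan is to prove the existence of the limit by monotonicity, use the known lower bound for one inequality, and then pin down the value. By domain monotonicity for the radial Sturm--Liouville problem the map $r_0\mapsto\lambda_{m,l}(r_0)$ is non-increasing, while the bound $-\left(\frac{n-1}{2}\right)^2K<\lambda_{m,l}(r_0)$ used in the proof of Theorem~\ref{thm:radial:model:space} holds for every $m,l$. Hence $L:=\lim_{r_0\to\infty}\lambda_{m,l}(r_0)$ exists and satisfies $L\ge-\left(\frac{n-1}{2}\right)^2K$, and it remains to establish the reverse inequality.

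When $m>0$ or $n>2$ I would simply let $r_0\to\infty$ in the upper estimate of Theorem~\ref{thm:radial:model:space}. The term $\left(\frac{j^l_{m+n/2-1}}{r_0}\right)^2$ vanishes, and since $\frac{1}{\sin_K^2(r_0)}-\frac{1}{r_0^2}\to0$ for $K<0$ (as recorded in the remark following the theorem), the whole upper bound converges to $-\left(\frac{n-1}{2}\right)^2K$. Together with the lower bound this squeezes $L$ to $-\left(\frac{n-1}{2}\right)^2K$.

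The genuine obstacle is the case $n=2$, $m=0$: there the upper estimate of Theorem~\ref{thm:radial:model:space} only tends to $-\frac13K$, which is strictly larger than $-\frac14K=-\left(\frac{n-1}{2}\right)^2K$ when $K<0$, so squeezing fails. To close this case I would argue by counting zeros in the Sturm--Liouville form \eqref{eq:curvature_comp}, written as $v_K''+\left(\mu(r_0)+\frac{1-4\tilde{m}^2}{4\sin_K^2(r)}\right)v_K=0$ with $\mu(r_0):=\left(\frac{n-1}{2}\right)^2K+\lambda_{m,l}(r_0)$; note $\mu(r_0)\ge\mu_\infty:=L+\left(\frac{n-1}{2}\right)^2K\ge0$, and that $v_K$ has exactly the $l-1$ interior zeros of $R_{m,l}^{r_0}$ in $(0,r_0)$ together with the boundary zero at $r_0$. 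Suppose $\mu_\infty>0$. Choosing $r_*$ independent of $r_0$ so that $\left|\frac{1-4\tilde{m}^2}{4\sin_K^2(r)}\right|\le\mu_\infty/2$ for $r\ge r_*$, the coefficient of $v_K$ is at least $\mu_\infty/2>0$ on $[r_*,r_0]$, and Sturm--Picone comparison (Theorem~\ref{D:T:sturm-comparison}) with $w''+\frac{\mu_\infty}{2}w=0$ forces $v_K$ to have at least $\big\lfloor(r_0-r_*)\sqrt{\mu_\infty/2}/\pi\big\rfloor$ zeros there. This exceeds $l$ once $r_0$ is large, contradicting the fixed total of $l$ zeros; hence $\mu_\infty=0$ and $L=-\left(\frac{n-1}{2}\right)^2K$. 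This zero-counting argument is uniform in $m,l,n$ and could in fact replace the squeezing above; the point requiring care is the exact zero count for the singular endpoint $r=0$, which I would justify via standard oscillation theory for the solution that stays bounded at the origin.
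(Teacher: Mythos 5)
Your proposal is correct, but for the critical case it takes a genuinely different route from the paper. For $m>0$ or $n>2$ both you and the paper squeeze $\lambda_{m,l}(r_0)$ between the constant lower bound $-\left(\tfrac{n-1}{2}\right)^2K$ and the upper estimate of Theorem~\ref{thm:radial:model:space}, whose curvature-correction term vanishes as $r_0\to\infty$ when $K<0$. You correctly identify that this squeeze fails for $n=2$, $m=0$, where the upper bound only gives $-\tfrac{K}{3}$. The paper closes that case by eigenvalue monotonicity in the angular index: comparing the quadratic forms of the radial operators $L_0$ and $L_1$ gives $\lambda_{0,l}(r_0)\le\lambda_{1,l}(r_0)$, and the already-settled $m=1$ case then forces the limit. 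You instead run an oscillation argument on the Liouville normal form \eqref{eq:curvature_comp}: if the shifted limit $\mu_\infty=L+\left(\tfrac{n-1}{2}\right)^2K$ were positive, Sturm--Picone comparison against $w''+\tfrac{\mu_\infty}{2}w=0$ would force $v_K$ to acquire on the order of $r_0\sqrt{\mu_\infty/2}/\pi$ zeros, contradicting the fixed count of $l$ zeros of the $l$-th eigenfunction. This is sound; the two auxiliary facts you invoke --- domain monotonicity of $\lambda_{m,l}(r_0)$ in $r_0$ for the fixed-$m$ radial problem (needed so that $\mu(r_0)\ge\mu_\infty$), and the fact that the $l$-th eigenfunction of the singular Sturm--Liouville problem has exactly $l-1$ interior zeros --- are both standard, though the paper does not need either of them. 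The trade-off: the paper's form-comparison is shorter and stays within tools it has already set up, while your zero-counting argument is self-contained, applies uniformly to all $(m,l,n)$ without the case split, and does not rely on first proving the $m=1$ case. Either proof is acceptable.
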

\begin{proof}
   By \cite[p.~46]{Ch84} we know that the smallest eigenvalue $\lambda_{0,1}(r_0)$ satisfies
   \[\lambda_{0,1}(r_0)\ge -\left(\frac{n-1}{2}\right)^2K.\]
   Using this lower bound together with taking the limit as $r_0$ goes towards infinity of the upper bound in Theorem \ref{thm:radial:model:space} gives the result for the case when either $m>0$ or $n>2$. Hence we are only left with the case when $n=2$ and $m=0$. Let us assume that $n=2$ and consider the operators 
   \[(L_0\phi)(r)=\frac{(\sin_K(r)\phi'(r))'}{\sin_K(r)}\quad\text{and}\quad (L_1\phi)(r)=\frac{(\sin_K(r)\phi'(r))'}{\sin_K(r)}-\frac{\phi(r)}{\sin_K^2(r)}.\]
   Then both $L_0$ and $L_1$ are negative and symmetric operators on the $L^2$-space 
      \[X_{r_0}= \left\{\phi \in H^2_{\mathrm{loc}}(0,r_0):\int_{0}^{r_0}\sin_K(r)\phi(r)^2\,\mathrm{dr}<\infty,\, \phi(r_0)=\phi(0)\phi'(0)=0\right\},\]
      with the norm \[\langle\phi, \xi\rangle_{\sin_K} = \int_{0}^{r_0}\phi(r)\overline{\xi(r)}\sin_K(r)\,\mathrm{dr}.\]
      Then for $\phi\in X_{r_0}$ one have the inequality 
      \[\langle L_0\phi, \phi\rangle_{\sin_K} \le\langle L_1\phi, \phi\rangle_{\sin_K}.\]
      This implies that the eigenvalues satisfy 
      \[\lambda_{0,l}(r_0)\le \lambda_{1,l}(r_0).\]
      Taking the limit as $r_0\to \infty$ gives 
      \[\lim_{r_0\to \infty}\lambda_{0,l}(r_0)\le\lim_{r_0\to \infty}\lambda_{1,l}(r_0)=-\left(\frac{n-1}{2}\right)^2K,\]
      proving the claim.
\end{proof}
\begin{remark}\hfill
\begin{enumerate}
    \item  For the smallest Dirichlet eigenvalue on hyperbolic space, i.e.\ $m=0$ and $l=1$, Corollary \ref{cor:D:2} is well known, see e.g.\ \cite[p.~46]{Ch84}. 
  \item There exist similar inequalities for eigenvalues for more general manifolds. One example of similar inequalities is given in e.g.\ \cite[Cor.~2.3]{Ch75}. As noted in \cite[Re.~1.3]{KLP21}, given a complete simply connected Riemannian manifold $(M,\mathbf{g})$ with Ricci curvature satisfying $\Ric\ge (n-1)(-\kappa)$ for $\kappa>0$ we have that 
  \[\lambda_k(B_r(x))\le \frac{(n-1)^2}{4}\kappa+\frac{c_nk^2}{r^2}.\]
  This gives the inequality 
  \[\lim_{r_0\to \infty}\lambda_k(r_0)\le \frac{(n-1)^2}{4},\]
  for $\mathbb{H}^n$ with constant curvature $-1$.
\end{enumerate}
\end{remark}
\section{Eigenvalues on Spherically Symmetric Manifolds}\label{sec:spherically_symmetric}
Let $(M,\mathbf{g})$ be a Riemannian manifold of dimension $n$. Fix a point $p\in M$ and use the notations $r_p(x)=\dist(x,p)$ and $S_{r_0}^{n-1}=r_p^{-1}(r_0)$. In this section we will work with geodesic spherical coordinates $(r, \theta)$ with respect to the point $p$, see \cite[Sec.~III.1.]{Ch06}. 
Furthermore, we will assume that $(M,\mathbf{g})$ is spherically symmetric with respect to the point $p$. This means that the metric $\mathbf{g}$ can be written as \[\mathbf{g}=dr\otimes dr + f^2(r)\mathbf{g}_{\mathbb{S}^{n-1}}\] in geodesic spherical coordinates as long as $r_p<\rho$ for some fixed $\rho$. By \cite[Prop.~1.4.7]{Pe16} the function $f:[0,\rho)\to [0,\infty)$ satisfies 
\[f(0)=0,\quad f'(0)=1,\quad f''(0)=0.\]
 Many harmonic manifolds and surfaces of revolution are examples of spherically symmetric manifolds.
Writing the Laplacian in geodesic spherical coordinates gives
\[\Delta = \frac{\partial^2}{\partial r^2} + (n-1)\frac{f'(r)}{f(r)}\frac{\partial}{\partial r}+ \frac{1}{f^2(r)}\Delta_{\mathbb{S}^{n-1}_1}.\]
We refer the reader to \cite[Sec.~4.2.3]{Pe16} for more information about spherically symmetric manifolds.

Using separation of variables we will study families of Dirichlet eigenvalues on the ball $B_t(p)$. We will refer to the parametrized family $R^t(r)$ as the radial part of the solution and $\Theta(\theta)$ as the spherical part. We will show that there exists an $L^2(B_t(p))$-basis on the form $R^{t}(r)\Theta(\theta)$ consisting of Dirichlet Laplace eigenfunctions on the ball $B_{t}(p)$. The parametrized family $R^t$ will always be assumed to be continuous in $t$. In the case for the sphere this was shown in \cite[Chap.\ II.5]{Ch84} with a similar approach.
The radial part $R^{t}$ solves the equation
      \begin{equation}\label{eq:radial part_spherical}
        (f^{n-1}(r) (R^{t})'(r))'- f^{n-3}(r)m(m+n-2)R^{t}(r)=-\lambda(t) f^{n-1}(r)R^{t}(r),
      \end{equation}
      with $R^t(t)=0$ and where $R^t(0)$ is bounded.
      Introduce the norm 
      \[\|\phi\|^2_f=\int_0^{t}\phi(r)^2f^{n-1}(r)\,\mathrm{dr}.\]
      The operator 
      \[(L_m\phi)(r)=f^{1-n}(r)\left((f^{n-1}(r) \phi'(r))'- f^{n-3}(r)m(m+n-2)\phi(r)\right)\]
      is unbounded and symmetric on $L^2((0,t), f^{n-1}(r)\mathrm{dr})$ defined on the subspace 
      \[X_{t}= \left\{\phi \in H^2_{\mathrm{loc}}(0,t):\int_{0}^{t}f^{n-1}(r)\phi(r)^2\,\mathrm{dr}<\infty,\, \phi(t)=\phi(0)\phi'(0)=0\right\}.\]
      Recall that $H^2_{\mathrm{loc}}(0,t)$ is the space where the second weak derivatives are locally in $L^2(0,t)$.
      The eigenvalues of $L_m$ are simple, since if $u,v$ are two eigenfunctions with the same eigenvalue $\lambda$ we have that 
      \begin{align*}
          0&=f^{n-1}(r)(uL_mv-vL_mu)\\
          &=u(f^{n-1}(r)v')'-v(f^{n-1}(r)u')'\\
          &=(f^{n-1}(r)(uv'-vu'))'.
      \end{align*}
      This means that $f^{n-1}(r)(uv'-vu')$ is constant. Using that $u(t)=v(t)=0$ we get that the Wronskian $uv'-vu'$ is zero. Hence $u$ and $v$ are linearly dependent.
      
      We will let $\lambda_{m,l}(t)$ be the $l$'th eigenvalue of the problem \eqref{eq:radial part_spherical} with the corresponding eigenfunction $R^{t}_{m,l}$.
    \begin{proposition}
       Let $(M, \mathbf{g})$ be a spherically symmetric manifold with respect to the point $p$ and a constant $0<\rho$ and consider the ball $B_{t}(p) \subset M$ with $t<\rho$. Then there exists an $L^2$-basis on $B_{t}(p)$ which consists of Dirichlet Laplace eigenfunctions on the form $u^t(r,\theta)=R^{t}_{m,l}(r)\Theta_m(\theta),$ where 
       \[\Delta_{\mathbb{S}^{n-1}}\Theta_m+m(m+n-2)\Theta_m = 0,\]
       and $R_{m,l}^{t}$ is the eigenfunction corresponding to the $l$'th eigenvalue of \eqref{eq:radial part_spherical}.
    \end{proposition}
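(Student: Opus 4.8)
The plan is to exploit the product structure of $L^2(B_t(p))$ furnished by geodesic spherical coordinates and then separate variables. First I would record that in these coordinates the Riemannian volume form factors as $\dvol = f^{n-1}(r)\,dr\,dS(\theta)$, where $dS$ is the standard measure on $\mathbb{S}^{n-1}_1$. Consequently there is a unitary identification
\[
L^2(B_t(p)) \;\cong\; L^2\big((0,t),\,f^{n-1}(r)\,dr\big)\otimes L^2(\mathbb{S}^{n-1}_1),
\]
so it suffices to produce orthonormal bases of the two factors whose tensor products turn out to be Dirichlet Laplace eigenfunctions.

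For the spherical factor I would invoke the classical fact that the spherical harmonics $\{\Theta_{m,k}\}$, i.e.\ the eigenfunctions of $\Delta_{\mathbb{S}^{n-1}_1}$ with eigenvalue $-m(m+n-2)$ (here $k$ indexes a basis of each finite-dimensional eigenspace), form a complete orthonormal system in $L^2(\mathbb{S}^{n-1}_1)$. For the radial factor, for each fixed $m$ I would appeal to Sturm--Liouville spectral theory for the symmetric operator $L_m$ on $X_t$ introduced in the excerpt: its eigenfunctions $\{R^t_{m,l}\}_l$ should form a complete orthonormal system in $L^2\big((0,t),f^{n-1}(r)\,dr\big)$, and the simplicity of the eigenvalues established above guarantees each $R^t_{m,l}$ is well defined up to normalization.

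Granting these two completeness statements, the collection $\{R^t_{m,l}(r)\Theta_{m,k}(\theta)\}$ is an orthonormal basis of the tensor product, hence of $L^2(B_t(p))$. It then only remains to check that each such product is a Dirichlet eigenfunction. Substituting $u = R\,\Theta_m$ into the expression for $\Delta$ and using $\Delta_{\mathbb{S}^{n-1}_1}\Theta_m = -m(m+n-2)\Theta_m$ gives
\[
\Delta u = \Big(R'' + (n-1)\tfrac{f'}{f}R' - \tfrac{m(m+n-2)}{f^2}R\Big)\Theta_m,
\]
and dividing \eqref{eq:radial part_spherical} by $f^{n-1}$ shows the parenthesis equals $-\lambda_{m,l}(t)R$, so $\Delta u = -\lambda_{m,l}(t)u$. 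The condition $R^t_{m,l}(t)=0$ yields the Dirichlet boundary condition on $S_t^{n-1}(p)$, while the regularity of $R^t_{m,l}$ at $r=0$ ensures $u$ extends smoothly across $p$.

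The main obstacle is the completeness of the radial eigenfunctions, since the endpoint $r=0$ is a singular point of \eqref{eq:radial part_spherical}: the weight $f^{n-1}$ vanishes and the coefficient $(n-1)f'/f$ behaves like $(n-1)/r$. I would handle this by a limit-point/limit-circle (Weyl) analysis at $r=0$, verifying that the boundedness condition encoded in $X_t$ selects the self-adjoint realization of $L_m$ whose resolvent is compact; discreteness of the spectrum and completeness of the eigenfunctions then follow from standard singular Sturm--Liouville theory. Alternatively, one may bypass the singular-endpoint analysis entirely by starting from the known spectral decomposition of the Dirichlet Laplacian on the smooth bounded domain $B_t(p)$ and using that the isometric $O(n)$-action commutes with $\Delta$, so each finite-dimensional eigenspace decomposes into $O(n)$-irreducibles, namely spaces spanned by products $R(r)\Theta_m(\theta)$; this second route trades the Weyl analysis for a representation-theoretic argument.
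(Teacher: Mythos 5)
Your proposal is correct, but your primary route is genuinely different from the paper's. You build the basis from the outside in: factor $L^2(B_t(p))$ as a tensor product, take spherical harmonics on the angular factor, and for each fixed $m$ invoke completeness of the eigenfunctions of the singular Sturm--Liouville problem \eqref{eq:radial part_spherical} in the weighted radial space. This is self-contained on the ODE side, but it shifts all the analytic weight onto the singular endpoint $r=0$ (limit-point/limit-circle classification, identification of the self-adjoint realization selected by the boundedness condition in $X_t$, compactness of the resolvent), and you only sketch that step. The paper works from the inside out: it takes as given the standard elliptic fact that the Dirichlet Laplacian on the bounded domain $B_t(p)$ admits a complete orthonormal system of eigenfunctions, expands an arbitrary eigenfunction as $u(r,\theta)=\sum_i a_i(r)\Theta_i(\theta)$, and shows by an explicit computation (moving $\Delta_{\mathbb{S}^{n-1}_1}$ onto $u$ inside the integral defining $a_i$) that each coefficient $a_i$ solves \eqref{eq:radial part_spherical} with the same eigenvalue $\lambda(t)$, so every eigenfunction is a finite sum of product eigenfunctions. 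This is exactly the ``alternative'' you mention in your final sentence, with the $O(n)$-isotypic language replaced by a direct calculation. What each approach buys: yours gives the stronger statement that, for each $m$, the family $\{R^t_{m,l}\}_l$ is complete in $L^2((0,t),f^{n-1}(r)\,dr)$; the paper's argument is shorter and avoids singular Sturm--Liouville theory entirely, at the price of leaning on elliptic spectral theory for $B_t(p)$. If you retain your primary route, the Weyl analysis at $r=0$ must actually be carried out --- note that for $n=2$, $m=0$ the endpoint is limit-circle, so the boundary condition encoded in $X_t$ is genuinely needed --- since as written that step is a placeholder rather than a proof.
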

    \begin{proof}
 Let $u(r,\theta) = R(r) \Theta(\theta)$ be an eigenfunction written in geodesic spherical coordinates with eigenvalue $\lambda(t)$.
    Then we have
    \begin{align*}
        \Delta u(r,\theta) &= R''(r)\Theta (\theta)+ (n-1)\frac{f'(r)}{f(r)} R'(r)\Theta(\theta) +R(r) \frac{1}{f^2(r)}\Delta_{\mathbb{S}_1^{n-1}}\Theta(\theta)\\
        &= -\lambda(t) R(r)\Theta(\theta).
    \end{align*}
    The above expression simplifies to 
    \begin{equation}
    \label{eq:independent:sides}
        \frac{f^2(r)R''(r)+ (n-1)f'(r)f(r) R'(r)+\lambda(t) f^2(r)R(r)}{R(r)} = - \frac{\Delta_{\mathbb{S}_1^{n-1}}\Theta(\theta)}{\Theta(\theta)}.
    \end{equation}
    Since the left hand-side of \eqref{eq:independent:sides} is independent of $\theta$ we have that $\Theta$ is a spherical harmonic function. Using that the eigenvalues on the $(n-1)$-sphere have the form $m(m+n-2)$ we get that $R$ satisfies \eqref{eq:radial part_spherical}.
    Hence if $R(r)$ is a solution to \eqref{eq:radial_spherical} satisfying $R(t)=0$ and $R(0)$ is bounded, then $u$ is a solution to the Dirichlet Laplace eigenvalue problem. 

    The only thing left to show is that each eigenfunction can be written as a sum of eigenfunctions on the form $R(r)\Theta(\theta)$. Since the spherical harmonics are the eigenfunctions of $\mathbb{S}^{n-1}$, we know that the spherical harmonics form an orthonormal basis for $L^2(\mathbb{S}^{n-1})$. This means that an arbitrary eigenfunction $u(r,\theta)$ with eigenvalue $\lambda(t)$ can be written as \[u(r,\theta) = \sum_{i=1}^\infty a_i(r)\Theta_i(\theta),\]
    where $\Theta_i$ is the $i$'th spherical harmonic function with eigenvalue $m_i(m_i+n-2)$. Using the Laplacian written out in spherical coordinates gives
    \begin{align*}
        -m_i(m_i+n-2)a_i(r)&=\int_{\mathbb{S}^{n-1}_1}u(r,\theta)\Delta_{\mathbb{S}_1^{n-1}}\Theta_i(\theta)\dS\\
        &=\int_{\mathbb{S}^{n-1}_1}\Delta_{\mathbb{S}_1^{n-1}}u(r,\theta)\Theta_i(\theta)\dS\\
        &=-\lambda(t) f^2(r) a_i(r)-f^2(r)a''_i(r)-(n-1)f'(r)f(r)a_i'(r).
    \end{align*}
    In particular, the function $a_i(r)\Theta_i(\theta)$ is an eigenfunction for all $i$. By orthogonality of eigenfunctions we get that $u$ can be written on the form \[u(r,\theta)=\sum_{i=j}^{m}a_i(r)\Theta_i(\theta),\] where $a_i(r)\Theta_i(\theta)$ has eigenvalue $\lambda(t)$.

    \end{proof}
\section{Eigenvalues on Balls}\label{sec:eigenvalues_balls}

Let $(M,\mathbf{g})$ be a Riemannian manifold (not necessarily spherically symmetric) and consider the ball $B_t(p)\subset M$ for $p \in M$ and $t<\inj(p)$. The notation $\inj(p)$ denotes the injectivity radius at $p$. Again we will look at the Dirichlet problem 
\begin{equation}\label{D:hh}
   \begin{cases}
      \Delta u^t+\lambda\left( t \right)u^t =0&\text{ on }B_t(p)\\
      u^t=0&\text{ on }S_{t}^{n-1}(p)
   \end{cases}.
\end{equation}
We will assume that $\int_{B_t(p)}(u^t)^2\dvol = 1$, in which case 
\[\lambda(t)= \int_{B_t(p)}|\grad u^t|^2\dvol.\]
By using the Hadamard formula presented in \cite[Cor.~2.1]{EI07} one has, assuming that $\lambda(t)$ and $u^t$ are differentiable with respect to $t$, that
\begin{equation*}
   \lambda'\left( t \right)=-\int_{S_{t}^{n-1}(p)}\left|u^t_{n}\right|^{2}\dS.
\end{equation*}

For solutions to the Dirichlet problem we have the following result.
\begin{proposition}\label{lem:vari}
   Let $u^t$ be a parametrized family of solution to \eqref{D:hh} which is normalized in $L^2$. 
   Denote by $r$ the radial distance from the point $p$ and let $r_1<\inj(p)$. Assume that both $\lambda(t)$ and $u^t$ are differentiable in $t$ for $r_0\le t\le r_1$.
   Then
   \begin{multline*}
      r_1^2\lambda(r_1)=\lim_{t\to r_0}t^2\lambda(t)+\\
      \int_{r_0}^{r_1} t\int_{B_{t}(p)}(u^t)^{2}\frac{\Delta\left( r_p\Delta r_p \right)}{2} +2 \left|\grad_{S^{n-1}_{r_p}(p)} u^t\right|^{2}-2r_p\nabla^{2}r_p\left( \grad u^t,\grad u^t \right)\dvol \mathrm{dt}.
   \end{multline*}
\end{proposition}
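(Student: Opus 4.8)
The plan is to show that the integrand on the right is precisely $\frac{d}{dt}\bigl(t^2\lambda(t)\bigr)$ and then integrate from $r_0$ to $r_1$ by the fundamental theorem of calculus. Since $u^t$ is $L^2$-normalized we have $\lambda(t)=\int_{B_t(p)}|\grad u^t|^2\dvol$, and the Hadamard formula quoted above gives $\lambda'(t)=-\int_{S_t^{n-1}(p)}|u^t_n|^2\dS$. Hence
\[
\frac{d}{dt}\bigl(t^2\lambda(t)\bigr)=2t\int_{B_t(p)}|\grad u^t|^2\dvol-t^2\int_{S^{n-1}_t(p)}|u^t_n|^2\dS ,
\]
so the entire task reduces to rewriting the boundary term $t^2\int_{S_t}|u^t_n|^2\dS$ as the three volume terms in the statement.

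The key tool is a Rellich--Pohozaev identity for the radial (position) vector field $X\coloneqq r_p\grad r_p$. Testing $\Delta u^t=-\lambda(t)u^t$ against $\langle X,\grad u^t\rangle$ and integrating by parts in two ways (once using the equation, once moving the divergence onto $\langle X,\grad u^t\rangle\grad u^t$), and using that $u^t\equiv0$ on $S^{n-1}_t(p)$ so that $\grad u^t=u^t_n\,\grad r_p$ there, I obtain
\begin{multline*}
\tfrac12\int_{S^{n-1}_t(p)}|u^t_n|^2\langle X,\grad r_p\rangle\dS
=\int_{B_t(p)}\nabla X(\grad u^t,\grad u^t)\dvol\\
+\tfrac{\lambda(t)}{2}\int_{B_t(p)}(u^t)^2\,\dv X\dvol
-\tfrac12\int_{B_t(p)}|\grad u^t|^2\,\dv X\dvol .
\end{multline*}
On $S^{n-1}_t(p)$ the outward unit normal is $\grad r_p$, so $\langle X,\grad r_p\rangle=r_p|\grad r_p|^2=t$, producing exactly the factor needed to match $t^2\int_{S_t}|u^t_n|^2\dS$. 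I should note that although $r_p$ is only Lipschitz at $p$, the field $X=r_p\grad r_p$ is the smooth position vector field and $r_p\Delta r_p=(n-1)+O(r_p^2)$ is bounded near $p$, so no interior boundary term is created at the center.

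Next I would compute the two geometric coefficients $\dv X=|\grad r_p|^2+r_p\Delta r_p=1+r_p\Delta r_p$ and $\nabla X(\grad u^t,\grad u^t)=\langle\grad r_p,\grad u^t\rangle^2+r_p\nabla^2 r_p(\grad u^t,\grad u^t)$, where the first term is the squared radial derivative $=|\grad u^t|^2-|\grad_{S^{n-1}_{r_p}(p)}u^t|^2$. Substituting these into the displayed identity and then into $\frac{d}{dt}(t^2\lambda)$, the terms proportional to $\lambda$ that do not carry the factor $r_p\Delta r_p$ cancel, the tangential-gradient and $r_p\nabla^2 r_p$ terms survive with the coefficients $+2$ and $-2$ required by the statement, and the only remaining claim is the identity
\[
-\lambda(t)\int_{B_t(p)}(u^t)^2\,r_p\Delta r_p\dvol+\int_{B_t(p)}|\grad u^t|^2\,r_p\Delta r_p\dvol=\tfrac12\int_{B_t(p)}(u^t)^2\,\Delta\bigl(r_p\Delta r_p\bigr)\dvol .
\]

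Writing $h\coloneqq r_p\Delta r_p$, this last identity follows from two integrations by parts: first $\lambda\int u^2h=\int h|\grad u|^2+\int u\langle\grad u,\grad h\rangle$ (from $\Delta u=-\lambda u$ and $u|_{S_t}=0$), whence the left side above equals $-\int u\langle\grad u,\grad h\rangle$; then $-\int u\langle\grad u,\grad h\rangle=-\tfrac12\int\langle\grad(u^2),\grad h\rangle=\tfrac12\int u^2\Delta h$, again with vanishing boundary term because $u^t=0$ on $S^{n-1}_t(p)$. Assembling these pieces identifies $\frac{d}{dt}(t^2\lambda(t))$ with $t$ times the stated integrand, and integrating in $t$ over $[r_0,r_1]$ completes the proof. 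The main obstacle I anticipate is purely the bookkeeping in the Rellich--Pohozaev step --- tracking the boundary contributions and verifying the exact cancellation of the bare $\lambda$ terms --- together with the mild regularity check at the center $p$ guaranteeing that every volume integral converges and that the differentiation-under-the-integral used in the Hadamard and energy formulas is justified under the stated differentiability hypotheses on $u^t$ and $\lambda(t)$.
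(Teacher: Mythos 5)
Your proof is correct and takes essentially the same route as the paper: the paper's Lemma~\ref{lem:1_D}, applied with $\varphi(r)=1/r$, is exactly the Rellich--Pohozaev identity for the position field $X=r_p\grad r_p$ that you use to convert the Hadamard boundary term $t^2\int_{S_t^{n-1}(p)}|u^t_n|^2\dS$ into volume integrals. The remaining steps --- the cancellation of the bare $\lambda$ terms via the $L^2$-normalization and the eigenvalue equation, the double integration by parts turning $\int_{B_t(p)}\bigl(|\grad u^t|^2-\lambda(t)(u^t)^2\bigr)r_p\Delta r_p\dvol$ into $\tfrac12\int_{B_t(p)}(u^t)^2\Delta(r_p\Delta r_p)\dvol$, and the final integration of $(t^2\lambda(t))'$ --- likewise mirror the paper's computation.
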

The above proposition was proved for the first eigenvalue in \cite[Lem.~3.1]{BF17}. The proof was based on variational methods. We will give another proof for general eigenvalues.
We first develop the following lemma of independent interest.
\begin{lemma}\label{lem:1_D}
    Let $u$ be a solution to $\Delta u + \lambda u = 0$ on the ball $B_t(p)$ where $t\le \inj(p)$.
    Let $\varphi:(0, \infty) \to (0, \infty)$ be such that $\frac{\grad r_p}{\varphi\left( r_p\left( x \right) \right)}$ is a smooth vector field. Then 
    \begin{align*}
        &\int_{S_{t}^{n-1}(p)}|\grad_{S_{t}^{n-1}(p)} u|^2-u_{n}^{2}\dS\\
        &=\varphi\left( t \right) \int_{B_{t}(p)}|\grad u|^{2}\left( \frac{\varphi\left( r_p\left( x \right) \right)\Delta r_p-\varphi'\left( r_p\left( x \right) \right)}{\varphi^{2}\left( r_p\left( x \right) \right)} \right)\dvol\\
        &\quad-2\varphi\left( t \right) \int_{B_{t}(p)}\frac{\left(\nabla^{2}r_p\left( \grad u,\grad u \right) -\lambda u_{r}u\right)\varphi\left( r_p\left( x \right) \right)-\varphi'\left( r_p\left( x \right) \right)u_r^{2}}{\varphi^{2}\left( r_p\left( x \right) \right)}\dvol.
    \end{align*}
\end{lemma}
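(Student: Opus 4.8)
The plan is to generate the identity from a Rellich--Pohozaev type divergence formula associated with the radial vector field $X\coloneqq \grad r_p/\varphi(r_p)$, which by the hypothesis on $\varphi$ is a \emph{smooth} vector field on all of $B_t(p)$. First I would establish the pointwise identity
\[
\dv\!\left(X(u)\grad u - \tfrac{1}{2}|\grad u|^2 X\right)=\nabla X(\grad u,\grad u)+X(u)\,\Delta u-\tfrac{1}{2}|\grad u|^2\,\dv X,
\]
valid for any smooth vector field $X$ and any function $u$. This follows by expanding the two divergences $\dv(X(u)\grad u)$ and $\dv(|\grad u|^2 X)$ in local coordinates, using the symmetry of the Hessian of $u$ to identify $X^i u^{;j}u_{;ij}=\tfrac{1}{2}\langle X,\grad|\grad u|^2\rangle$, and cancelling the common gradient term $\langle X,\grad|\grad u|^2\rangle$.

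Next I would integrate this identity over $B_t(p)$ and apply the divergence theorem with outward unit normal $\nu=\grad r_p$, which is a unit vector since $|\grad r_p|=1$. On the boundary $r_p\equiv t$, so $\langle X,\nu\rangle=1/\varphi(t)$ and $X(u)=u_n/\varphi(t)$; splitting $|\grad u|^2=|\grad_{S_t^{n-1}(p)}u|^2+u_n^2$ into its tangential and normal parts converts the boundary integral into $-\tfrac{1}{2\varphi(t)}\int_{S_t^{n-1}(p)}(|\grad_{S_t^{n-1}(p)}u|^2-u_n^2)\dS$, which is the left-hand side of the lemma up to the scalar factor. Then I would compute the two geometric quantities attached to $X$: a direct calculation gives $\dv X=(\varphi(r_p)\Delta r_p-\varphi'(r_p))/\varphi^2(r_p)$ and $\nabla X(\grad u,\grad u)=\nabla^2 r_p(\grad u,\grad u)/\varphi(r_p)-\varphi'(r_p)u_r^2/\varphi^2(r_p)$, where $u_r=\langle\grad u,\grad r_p\rangle$. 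Substituting $\Delta u=-\lambda u$ into the term $X(u)\Delta u=-\lambda u\,u_r/\varphi(r_p)$, collecting the remaining terms, and multiplying through by $-2\varphi(t)$ rearranges everything into the stated form.

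The main obstacle is purely the smoothness of $X$ at the centre $p$: the distance function $r_p$ is only Lipschitz there and $\grad r_p$ is singular, so the divergence theorem on the full ball is legitimate precisely because the hypothesis that $\grad r_p/\varphi(r_p)$ is smooth removes this singularity. I would make this explicit by noting that the integrand $X(u)\grad u-\tfrac{1}{2}|\grad u|^2 X$ is then a globally smooth vector field on $B_t(p)$ (as $u$ is smooth, being an eigenfunction), so no spurious interior boundary contribution arises at $p$. The remaining work is the routine index bookkeeping in the Rellich identity and the explicit evaluations of $\dv X$ and $\nabla X(\grad u,\grad u)$.
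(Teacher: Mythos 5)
Your proposal is correct and follows essentially the same route as the paper: the paper's proof also computes the divergence of the Pohozaev-type field $2X(u)\grad u-|\grad u|^2X$ with $X=\grad r_p/\varphi(r_p)$, uses the symmetry of $\nabla^2u$ to cancel the $\langle X,\grad|\grad u|^2\rangle$ terms, evaluates $\dv X$ and $\langle\nabla_{\grad u}X,\grad u\rangle$ exactly as you do, and concludes with the divergence theorem. Your explicit remark that the hypothesis on $\varphi$ removes the singularity of $\grad r_p$ at $p$ is a welcome addition that the paper leaves implicit.
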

\begin{proof}
   We will use the notation $X=\frac{\grad r_p}{\phi(r_p(x))}$ and \[V=2X(u)\grad u-|\grad u|^2X.\]
   Taking the divergence of $V$, one obtains
   \begin{align*}
      \dv(V)&=2\langle\grad X(u),\grad u\rangle-2\lambda uX(u)-\langle\grad |\grad u|^2,X\rangle\\ 
      &\quad-|\grad u|^2\dv(X)\\
      &=2\langle \nabla_{\grad u} X,\grad u\rangle + 2\langle X,\nabla_{\grad u}\grad u\rangle-2\langle\nabla_X\grad u,\grad u\rangle \\
      &\quad -2\lambda uX(u)-|\grad u|^2\dv(X)\\
      &=2\langle \nabla_{\grad u} X,\grad u\rangle+\nabla^2u(\grad u,X) -2\nabla^2u(X,\grad u)-2\lambda uX(u)\\
      &\quad-|\grad u|^2\dv(X)\\
      &=2\langle \nabla_{\grad u} X,\grad u\rangle-2\lambda uX(u)-|\grad u|^2\dv(X).
   \end{align*}
   Expanding the term 
   \[\left\langle \nabla_{\grad u}\frac{\grad r_p}{\phi(r_p(x))},\grad u\right\rangle=\frac{\phi(r_p(x))\nabla^2r_p(\grad u,\grad u)-\phi'(r_p(x))u_r^2}{\phi(r_p(x))^2}\]
   we get
   \begin{align*}
      \dv(V)&=\frac{2(\phi(r_p(x))\nabla^2r_p(\grad u,\grad u)-\phi'(r_p(x))u_r^2)-2\lambda\phi(r_p(x))uu_r}{\phi(r_p(x))^2}\\ 
      &\quad+\frac{|\grad u|^2(\phi(r_p(x))\Delta r_p-\phi'(r_p(x)))}{\phi(r_p(x))^2}.
   \end{align*}
   Using the divergence theorem with $\dv(V)$ together with  \[\langle V, \grad r_p\rangle=\frac{|\grad_{S_t^{n-1}(p)}u|^2-(u_r)^2}{\phi(t)},\]
   gives the result.
\end{proof}
\begin{proof}[Proof of Prop.~\ref{lem:vari}]
Lemma \ref{lem:1_D} with $\varphi(r_p(x)) =\frac{1}{ r_p(x)}$ implies that
\begin{multline*}
   t\lambda'\left( t \right)=\int_{B_{t}(p)}\Big(\left|\grad u^t\right|^{2}\left( r_p\Delta r_p +1 \right)\\
   -2\left( r_p\nabla^{2}r_p\left( \grad u^t,\grad u^t \right)+(u^t)^{2}_{n}-\lambda\left( t \right)r_p(u^t)_{n}u^t \right)\Big)\dvol.
\end{multline*}
Using the equation
\begin{equation*}
   \int_{B_{t}(p)}(u^t)_{n}u^tr_p\dvol=-\frac{1}{2}\int_{B_{t}(p)}(u^t)^{2}\dvol-\frac{1}{2}\int_{B_{t}(p)}r_p\Delta r_p (u^t)^{2}\dvol
\end{equation*}
gives
\begin{align}\label{eq:lambda prime}
   t\lambda'\left( t \right)&=\int_{B_{t}(p)}\left|\grad u^t\right|^{2}\left( r_p\Delta r_p +1 \right)-2\left( r_p\nabla^{2}r_p\left( \grad u^t,\grad u^t \right)+(u^t)^{2}_{n} \right)\dvol\nonumber\\
   &\quad-\lambda\left( t \right)-\lambda\left( t \right)\int_{B_{t}(p)}r_p\Delta r_p (u^t)^{2}\dvol.
\end{align}
The equation \eqref{eq:lambda prime} together with $(t^2\lambda(t))'=t(2\lambda(t)+t\lambda'(t))$ implies that 
\begin{align*}
   \left(t^2\lambda\left( t \right)\right)'&=t\int_{B_{t}(p)}\left|\grad u^t\right|^{2}\left( r_p\Delta r_p +2 \right)-2r_p\nabla^{2}r_p\left( \grad u^t,\grad u^t \right)\dvol\\ &\quad-2t \int_{B_t(p)}(u^t)^{2}_{n} \dvol-t\lambda\left( t \right)\int_{B_{t}(p)}r_p\Delta r_p (u^t)^{2}\dvol.
\end{align*}
Simplifying the expression further gives us that
\begin{align*}
   (t^2\lambda\left( t \right))'
   &=t\int_{B_{t}(p)}\left(\frac{1}{2}\Delta ((u^t)^2)+\lambda(t)(u^t)^2\right)r_p\Delta r_p\dvol \\
   &\quad+2t\int_{B_t(p)}\left( \left|\grad_{S^{n-1}_r(p)} u^t\right|^{2}-r_p\nabla^{2}r_p\left( \grad u^t ,\grad u^t \right)\right)\dvol\\
   &\quad-t\lambda\left( t \right)\int_{B_{t}(p)}r_p\Delta r_p (u^t)^{2}\dvol\\
   &=-t\lambda\left( t \right)\int_{B_{t}(p)}\frac{1}{2}\left\langle\grad (u^t)^{2},\grad\left( r_p\Delta r_p \right)\right\rangle\dvol\\
   &\quad +2t\int_{B_t(p)}\left( \left|\grad_{S^{n-1}_r(p)} u^t\right|^{2}-r_p\nabla^{2}r_p\left( \grad u^t,\grad u^t \right)\right)\dvol\\
   &=t\int_{B_{t}(p)}(u^t)^{2}\frac{\Delta\left( r_p\Delta r_p \right)}{2} +2 \left|\grad_{S^{n-1}_r(p)} u^t\right|^{2}\dvol\\
      &\quad-2t\int_{B_{t}(p)}r_p\nabla^{2}r_p\left( \grad u^t,\grad u^t \right)\dvol.
\end{align*}
Finally, integrating the identity above gives the result.
\end{proof}
\begin{remark}
Let $(M,\mathbf{g})$ be an analytic Riemannian manifold and let $\phi_s:B_t(p)\to B_{s+t}(p)$ be the flow of the vector field $\grad r_p$ which is analytic for $r+t< \inj(p)$. Then by \cite[Lem.~3.1]{EI07} we can find a differentiable family $\lambda(t)$ and $u^t$ for $t\in(r-\epsilon, r+\epsilon)$.
\end{remark}
\section{Spherical Symmetric Manifolds}\label{sec:Spherical Symmetric Manifolds}

In this section we will assume that the $n$-dimensional Riemannian manifold $(M,\mathbf{g})$ is spherically symmetric with respect to the point $p \in M$ on the ball $B_\rho(p)$. We will use the notation introduced in Section \ref{sec:spherically_symmetric}.
Before stating the main result, we will show that the following Hadamard formula holds:
\begin{theorem}\label{lem:der}
   Let $u^t(r,\theta)=R_{m,l}^t(r)\Theta(\theta)$ be an $L^2$-normalized solution to \eqref{D:hh} on the ball $B_t(p)$ where \[\Delta_{\mathbb{S}_1^{n-1}} \Theta=-m(m+n-2)\Theta,\] 
   and $R_{m,l}^t$ satisfies \eqref{eq:radial part_spherical} with $R_{m,l}^t(t)=0$ and $R_{m,l}^t(0)$ being bounded. 
   Then we have that 
   \[\frac{d\lambda_{m,l}(t)}{dt}=-f(t)^{n-1}(R_{m,l}^t(t)')^2, \qquad \text{a.e.\ in }t\in(0,\rho).\]
\end{theorem}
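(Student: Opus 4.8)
The plan is to specialize the general Hadamard formula recorded in Section~\ref{sec:eigenvalues_balls}, namely $\lambda'(t)=-\int_{S_t^{n-1}(p)}|u^t_n|^2\dS$, to the separated eigenfunctions $u^t=R^t_{m,l}\Theta$ and to read off the boundary integral explicitly. First I would note why a derivative is available almost everywhere: by domain monotonicity the function $t\mapsto\lambda_{m,l}(t)$ is non-increasing on $(0,\rho)$, since $s<t$ gives $B_s(p)\subset B_t(p)$ and hence $\lambda_{m,l}(t)\le\lambda_{m,l}(s)$; a monotone function is differentiable at almost every point, which is the source of the almost-everywhere qualifier in the statement. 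At such a point the Hadamard formula applies and only the evaluation of the surface integral remains.

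For that evaluation I would use that in geodesic spherical coordinates $|\grad r_p|=1$, so the outward unit normal of $S_t^{n-1}(p)$ is $\partial_r$ and the normal derivative is $u^t_n=\partial_r u^t=(R^t_{m,l})'(r)\,\Theta(\theta)$, equal to $(R^t_{m,l})'(t)\,\Theta(\theta)$ on the boundary sphere $\{r=t\}$. Because $\mathbf{g}=dr\otimes dr+f^2(r)\mathbf{g}_{\mathbb{S}^{n-1}}$, the induced measure is $\dS=f(t)^{n-1}\,d\sigma(\theta)$ with $d\sigma$ the standard measure on $\mathbb{S}^{n-1}$, so
\[
\int_{S_t^{n-1}(p)}|u^t_n|^2\dS=f(t)^{n-1}\big((R^t_{m,l})'(t)\big)^2\int_{\mathbb{S}^{n-1}}\Theta^2\,d\sigma .
\]
Taking $\Theta$ to be $L^2(\mathbb{S}^{n-1})$-normalized, which is the natural choice since the spherical harmonics form an orthonormal basis, the factor $\int_{\mathbb{S}^{n-1}}\Theta^2\,d\sigma$ equals $1$ and the $L^2$-normalization of $u^t$ is then equivalent to $\|R^t_{m,l}\|_f=1$. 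Substituting into the Hadamard formula yields $\lambda_{m,l}'(t)=-f(t)^{n-1}\big((R^t_{m,l})'(t)\big)^2$ at every point of differentiability.

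The main obstacle is the regularity actually needed, because the general Hadamard formula presupposes that the family $u^t$, and not merely $\lambda_{m,l}$, is differentiable in $t$. To obtain the identity rigorously on the full-measure set where $\lambda_{m,l}'$ exists, and to avoid appealing to differentiability of $u^t$, I would derive the formula directly from the radial equation~\eqref{eq:radial part_spherical}. Writing $R^t,R^s$ for the eigenfunctions at radii $t>s$, multiplying their equations by $R^s$ and $R^t$ respectively and subtracting turns the left-hand side into the exact derivative $\big(f^{n-1}(R^s(R^t)'-R^t(R^s)')\big)'$. Integrating over $(0,s)$ and using $f(0)=0$ together with $R^s(s)=0$ gives the exact relation
\[
\lambda_{m,l}(t)-\lambda_{m,l}(s)=\frac{f(s)^{n-1}\,R^t(s)\,(R^s)'(s)}{\int_0^s f^{n-1}R^tR^s\,dr}.
\]

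Dividing by $t-s$ and letting $t\to s$ completes the argument: the denominator tends to $\|R^s\|_f^2=1$ by continuity of $R^t$ in $t$, while $R^t(s)=R^t(s)-R^t(t)=-\int_s^t (R^t)'\,dr$ forces $R^t(s)/(t-s)\to-(R^s)'(s)$, so the right-hand side converges to $-f(s)^{n-1}\big((R^s)'(s)\big)^2$. At every $s$ where $\lambda_{m,l}'(s)$ exists the left-hand difference quotient converges to it, giving the claim. The one delicate point in this route is the joint continuity of $(r,t)\mapsto (R^t_{m,l})'(r)$ near the diagonal, which I would justify by continuous dependence of the solution of the Sturm--Liouville problem on the endpoint $t$ and on the parameter $\lambda_{m,l}(t)$; the same reasoning applied from the side $s>t$ shows the two one-sided limits agree.
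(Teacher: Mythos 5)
Your second argument is, up to notation, the paper's own proof. The exact relation
\[
\lambda_{m,l}(t)-\lambda_{m,l}(s)=\frac{f(s)^{n-1}\,R^t_{m,l}(s)\,(R^s_{m,l})'(s)}{\int_0^s f^{n-1}(r)R^t_{m,l}(r)R^s_{m,l}(r)\,\mathrm{dr}}
\]
coming from the Wronskian of the two radial equations is the paper's identity \eqref{eq:rew1} stripped of the rescaling factors, and the passage $t\to s$ is organized the same way (difference quotient of $\lambda$, difference quotient of $R^t_{m,l}(s)-R^t_{m,l}(t)$, continuous dependence of the solution on the endpoint and on $\lambda_{m,l}$). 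Your opening observation that $\lambda_{m,l}$ is monotone by domain monotonicity and hence differentiable a.e.\ is a clean way to account for the a.e.\ qualifier, which the paper leaves implicit; and you correctly discard the naive Hadamard-formula route for the same regularity reason the paper does.

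The one step you pass over too quickly is ``the denominator tends to $\|R^s\|_f^2=1$ by continuity of $R^t$ in $t$'', and this is precisely where most of the technical content of the paper's proof lives. Two issues are hidden there. First, $R^t_{m,l}$ lives on a $t$-dependent interval and is normalized in $L^2(f^{n-1}\mathrm{dr})$; continuous-dependence theorems give continuity of the solution with fixed Cauchy data, so the paper instead normalizes by $f^{n-1}(t)(R^t_{m,l})'(t)$ at the moving endpoint, rescales $r\mapsto\frac{t+h}{t}r$ to a fixed interval so that Theorem \ref{thm:second_order} applies, and must then show that the $L^2$-normalizing constants converge. Second, the first-order system is singular at $r=0$ (the coefficient $f^{1-n}$ blows up), so the continuity obtained is only uniform on compact subsets of $(0,t]$; to pass to the limit in $\int_0^s f^{n-1}R^t R^s\,\mathrm{dr}$ one needs a bound on the integrand near $r=0$ that is uniform in $t$. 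The paper supplies this by a monotonicity argument for $m\ge 1$ (the radial part is monotone before its first critical point) and a Cauchy--Schwarz estimate for $m=0$. None of this invalidates your outline --- the conclusion and the route are right --- but these estimates, together with the joint continuity of $(t,r)\mapsto (R^t_{m,l})'(r)$ near the diagonal that you do flag, are the proof rather than a footnote to it, and a complete write-up has to carry them out.
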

To show Theorem \ref{lem:der} we need the following result:
\begin{theorem}[{\cite[Thm.~2.1~p.~23]{We87}}]\label{thm:second_order}
Let $A_{\_}:(-c,c)\times(a,b)\to \mathrm{Mat}(\mathbb{R}^n)$ and $y_{\_}:(-c,c)\to \mathbb{R}^n$ be continuous functions. Fix a point $x_0\in (a,b)$.
Then the problem 
\[\begin{cases}
\frac{dU_h(r)}{dr}=A_h(r)U_h(r)&\text{ for } r\in (a,b),\\
U_h(x_0)=y_h,
\end{cases}\]
has a unique solution $U_h(x)$ that is continuous in $(-c,c)\times(a,b)$.
\end{theorem}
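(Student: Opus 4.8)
The plan is to reduce the initial value problem to an equivalent integral equation and then run a Picard--Lindel\"of iteration in which all estimates are made uniform in the parameter $h$. Writing $A_h(r) = A(h,r)$ and $y_h = y(h)$, the stated ODE together with its initial condition is equivalent, for each fixed $h$, to
\[
U_h(x) = y_h + \int_{x_0}^{x} A_h(r)\, U_h(r)\, dr,
\]
because $A$ is continuous and hence any continuous solution of this integral equation is automatically $C^1$ in $x$ and solves the ODE. The crucial structural feature is linearity: the only Lipschitz constant in the $U$-variable is the operator norm $\|A_h(r)\|$, so continuity of $A$ alone suffices and, moreover, solutions cannot blow up in finite time inside $(a,b)$. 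I would therefore fix an arbitrary compact box $[-c',c'] \times [\alpha,\beta] \subset (-c,c) \times (a,b)$ with $x_0 \in [\alpha,\beta]$, establish existence, uniqueness and joint continuity on it, and then let the box exhaust the whole domain, since every point of $(-c,c)\times(a,b)$ lies in such a box.

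Next I would set up the successive approximations $U_h^{(0)}(x) = y_h$ and $U_h^{(k+1)}(x) = y_h + \int_{x_0}^x A_h(r)\, U_h^{(k)}(r)\,dr$. On the compact box the continuous data are bounded, say $\|A\| \le M$ and $\|y_h\| \le C$. An induction shows that each $U_h^{(k)}$ is jointly continuous in $(h,x)$, since the integrand $A_h(r)U_h^{(k)}(r)$ is jointly continuous, the map $(h,x)\mapsto\int_{x_0}^x(\,\cdot\,)\,dr$ preserves joint continuity, and $y_h$ is continuous in $h$. The standard telescoping estimate
\[
\bigl\| U_h^{(k+1)}(x) - U_h^{(k)}(x) \bigr\| \le C\,\frac{\bigl(M\,|x-x_0|\bigr)^{k+1}}{(k+1)!}
\]
then holds uniformly over the box. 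Consequently the telescoping series $\sum_k \bigl(U_h^{(k+1)} - U_h^{(k)}\bigr)$ converges uniformly there, so its limit $U_h(x)$ is a uniform limit of jointly continuous functions and is itself jointly continuous in $(h,x)$. Passing to the limit under the integral sign in the recursion shows that $U_h$ solves the integral equation, hence the ODE, with $U_h(x_0) = y_h$.

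For uniqueness I would invoke Gr\"onwall's inequality: if $U_h$ and $V_h$ both solve the integral equation on $[\alpha,\beta]$, then $\|U_h(x) - V_h(x)\| \le M \bigl| \int_{x_0}^x \|U_h - V_h\|\,dr \bigr|$, which forces the difference to vanish. Since the compact box was arbitrary and solutions on overlapping boxes coincide by uniqueness, the local solutions patch into a single solution on all of $(-c,c)\times(a,b)$ that is continuous there. The main obstacle is precisely making the iteration estimates uniform in the parameter $h$ \emph{simultaneously} with the spatial variable $x$, so that the limit is jointly continuous rather than merely continuous in each variable separately; this is exactly where the boundedness of $A$ and $y$ on the compact box and the factorial decay in the telescoping estimate are used.
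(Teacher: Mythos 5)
Your proof is correct, and it is worth noting where it parts ways with the paper's argument. Both proofs start from the same reduction to the integral equation $U_h(x)=y_h+\int_{x_0}^{x}A_h(r)U_h(r)\,dr$, but the paper (following Weidmann) then applies the Banach fixed point theorem: it chooses a subinterval around $x_0$ on which $\int|A_h|\,dt\le q<1$, so that the integral operator $B_h$ is a contraction with respect to the supremum norm, and obtains existence and uniqueness by patching such intervals over compact subsets of $(a,b)$. Joint continuity of $(h,x)\mapsto U_h(x)$ is then proved \emph{a posteriori} by a direct estimate comparing the two fixed points $U_{h'}$ and $U_h$ on an interval $I$ with $\int_I|A_{h'}|\,dt\le\tfrac12$, which requires absorbing the term $\sup|U_{h'}-U_h|\int_I|A_h|\,dt$ into the left-hand side. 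You instead run successive approximations on an arbitrary compact box: linearity gives the factorial bound $C\,(M|x-x_0|)^{k+1}/(k+1)!$ uniformly in $h$, so no smallness condition on the length of the interval is ever needed; uniqueness comes from Gr\"onwall rather than from the contraction property; and, most importantly, joint continuity is structural rather than computed, since the solution is a uniform limit of jointly continuous iterates. Your route thus avoids both the interval-shrinking bookkeeping and the delicate continuity estimate the paper carries out by hand, at the modest cost of the inductive telescoping estimate; the paper's route, in exchange, gets existence and uniqueness in a single application of a standard fixed point theorem. Both hinge on the same two ingredients: the equivalence with the integral equation, and uniform bounds on compact sets in the parameter $h$.
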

\begin{proof}
   For completeness, we repeat and  slightly modify the proof from \cite[Thm.~2.1~p.~23]{We87}.
   
   Fix $h$ and let $[x_0-\eta,x_0+\eta]$ be such that 
   \[\int_{x_0-\eta}^{x_0+\eta}|A_h|\,\mathrm{dt}\le q<1.\] Define the operator
   $B_h:C([x_0-\eta,x_0+\eta])\to C([x_0-\eta,x_0+\eta])$
   by 
   \[B_hu(x)=y_h+\int_{x_0}^xA_h(t)u(t)\,\mathrm{dt},\]
   where $C([x_0-\eta,x_0+\eta])$ is the space of continuous functions with the supremum norm $\|\cdot\|_{\infty}$.
   Notice that $B_h$ is a contraction since
   \[\|B_hu-B_hv\|_{\infty}\le q\|u-v\|_{\infty} .\]
   By the Banach fixed point theorem there exists a unique fixed point, which we will denote by $U_h$, giving the existence and uniqueness at the interval $[x_0-\eta, x_0+\eta]$. Choosing a compact set $K$ inside $(a,b)$ and a finite covering $\{(x_i-\eta,x_i+\eta)\}_{i\in \mathcal{J}}$ of $K$ where 
   \[\int_{x_i-\eta}^{x_i+\eta}|A_h|\,\mathrm{dt}\le q<1,\]
   gives the existence and uniqueness of the solution on $K$. A compact sweeping of the interval $(a,b)$ gives the uniqueness and existence result on $(a,b)$.
   
   We are only left to show that $U_h$ is continuous. Fix a value $h'\in (-c, c)$ and let $I= [x_0-\eta,x_0+\eta] \subset  (a,b)$ be such that 
   \[\int_{I}|A_{h'}(t)|\, \mathrm{dt}\le \frac{1}{2}.\] 
   Then for $y\in I$ we obtain
   \begin{align*}
      \sup_{x\in I}|U_{h'}(y)-U_h(x)| &=\sup_{x\in I}|B_{h'}U_{h'}(y)-B_hU_h(x)|\\
      &\le|y_{h'}-y_h| +|x-y|\sup_{x\in I}|A_{h'}(x)|\sup_{x\in I}|U_{h'}(x)|\\
      &\quad +\sup_{x\in I}|U_{h'}(y)-U_h(x)|\int_{I}|A_{h}(t)|\, \mathrm{dt}\\
      &\quad +\sup_{x\in I}|U_{h'}(y)-U_{h'}(x)|\int_{I}|A_{h}(t)|\, \mathrm{dt}\\
      &\quad+ \sup_{x\in I}|U_{h'}(x)|\int_{I}|A_h(t)-A_{h'}(t)|\, \mathrm{dt}.
   \end{align*}
  One can find a box $[h'-\xi, h'+\xi]\times [y-\delta, y+\delta]$ such that for all $x\in [y-\delta, y+\delta]$ and all $h\in [h'-\xi, h'+\xi]$ we have that
   \[\sup_{x\in I}|U_{h'}(x)|\sup_{h\in [h'-\xi, h'+\xi]}\int_{I}|A_h(t)-A_{h'}(t)|\, \mathrm{dt}\le \frac{\epsilon}{8},\]
   \[\sup_{x\in I}|U_{h'}(y)-U_{h'}(x)|\int_{I}|A_{h}(t)|\, \mathrm{dt}\le \frac{\epsilon}{8},\]
  \[|x-y|\sup_{x\in I}|A_{h'}(x)|\sup_{x\in I}|U_{h'}(x)|<\frac{\epsilon}{8},\] 
   and 
   \[\sup_{h\in [h'-\xi, h'+\xi]}|y_h-y_{h'}|\le \frac{\epsilon}{8},\]
   showing continuity at $(h',y)$.
   Hence $U_{h}(x)$ is continuous.
\end{proof}
\begin{proof}[Proof of Thm.~\ref{lem:der}]
The outline of the proof is inspired by \cite[Thm.~3.1]{KZ96}, where the author shows several similar results for Sturm-Liouville equations.

Recall that $R_{m,l}^t$ satisfies the equation 
\begin{equation*}
\begin{cases}
   (f^{n-1}(r)(R_{m,l}^t(r))')'+f^{n-1}(r)\left(\lambda_{m,l}(t)-\frac{m(m+n-2)}{f^{2}(r)}\right)R_{m,l}^t(r)=0,\\
   R_{m,l}^t(0)(R_{m,l}^t)'(0)=R_{m,l}^t(t)=0.
\end{cases}
\end{equation*}
The solution is assumed to satisfy 
   \[\int_0^tf^{n-1}(r)(R_{m,l}^t(r))^2\,\mathrm{dr}=1.\]
  We will start by showing that the family  $R_{m,l}^t$ is uniformly continuous on compact subsets of $(0,t]$ . 
   The differential equation 
   \[   (f^{n-1}(r)(R_{m,l}^t(r))')'+f^{n-1}(r)\left(\lambda_{m,l}(t)-\frac{m(m+n-2)}{f^{2}(r)}\right)R_{m,l}^t(r)=0\]
   has two linearly independent solutions on the interval $(0,t]$, one of which is bounded at $0$ and the other is unbounded at $0$. By existence and uniqueness of second order ordinary differential equations, see e.g.\ \cite[Thm.~A~p.~488]{Si17}, the two solutions can not have common zeroes. Additionally, we will use that the eigenvalues are continuous in the variable $t$, see e.g.~\cite[Thm.~2.10]{Ur17}.  
   
  Let $0\le h<\epsilon_1$, where $t+\epsilon_1<\rho$, and $\rho$ is the largest radius where the manifold is radially symmetric. Define the matrix 
  \[A_h(r)=\frac{t+h}{t}\begin{pmatrix}
      0 & f^{1-n}(\frac{t+h}{t}r)\\
      f^{n-1}(\frac{t+h}{t}r)\left(\frac{m(m+n-2)}{f^{2}(\frac{t+h}{t}r)}-\lambda_{m,l}(t+h)\right)& 0
  \end{pmatrix}.
\]
Then we can consider the problem
  \begin{equation}\label{eq:onedim}
  \begin{cases}
  U_h'(r) =A_h(r)U_h(r)& \text{on } (0,t\rho/(t+\epsilon_1)),\\
  U_h(t)=\begin{pmatrix}
      0\\
      \sqrt{\frac{t}{t+h}}
   \end{pmatrix}.
  \end{cases}
\end{equation}
By Theorem \ref{thm:second_order} this problem has a unique continuous solution coinciding with 
   \[U_h(r)=\frac{1}{f^{n-1}(t+h)(R_{m,l}^{t+h})'(t+h)}\begin{pmatrix}
      \sqrt{\frac{t}{t+h}}R_{m,l}^{t+h}(\frac{t+h}{t}r)\\
      \sqrt{\frac{t}{t+h}}f^{n-1}(\frac{t+h}{t}r)(R_{m,l}^{t+h})'(\frac{t+h}{t}r)
   \end{pmatrix},\]
   on the interval $(0,t]$.
   This means that the components of $U_h$ given by
   \[U_{h,1}(r)=\frac{\sqrt{\frac{t}{t+h}}R_{m,l}^{t+h}(\frac{t+h}{t}r)}{f^{n-1}(t+h)(R_{m,l}^{t+h})'(t+h)}\]
   and
   \[ U_{h,2}(r)=\frac{ \sqrt{\frac{t}{t+h}}f^{n-1}(\frac{t+h}{t}r)(R_{m,l}^{t+h})'(\frac{t+h}{t}r)}{f^{n-1}(t+h)(R_{m,l}^{t+h})'(t+h)}\]
   are continuous.
   
  We will now turn to finding the derivative of $\lambda_{m,l}(t)$. Denote by \[N(t,h) = \frac{t/(t+h)}{f^{n-1}(t+h)(R_{m,l}^{t+h})'(t+h)f^{n-1}(t)(R_{m,l}^{t})'(t)}. \]
  Using integration by parts shows that 
  \begin{align*}
  U_{h,1}\left(\frac{t^2}{t+h}\right)U_{0,2}(t)&=N(t,h)\int_0^t(R_{m,l}^{t+h}(r)f^{n-1}(r)(R_{m,l}^{t})'(r))'\,\mathrm{dr} \\\nonumber 
   &=N(t,h)\int_0^tR_{m,l}^{t+h}(r)(f^{n-1}(r)(R_{m,l}^{t})'(r))'\,\mathrm{dr}\\\nonumber
   &\quad-N(t,h)\int_0^t((R_{m,l}^{t+h})'(r)f^{n-1}(r))'R_{m,l}^{t}(r)\,\mathrm{dr}.\\\nonumber
   \end{align*}
   Using the differential equation for $R^{t+h}_{m,l}$ and $R^{t}_{m,l}$ gives
   \begin{multline}
   \label{eq:rew1}
   U_{h,1}\left(\frac{t^2}{t+h}\right)U_{0,2}(t)\\=(\lambda_{m,l}(t+h)-\lambda_{m,l}(t))N(t,h)\int_0^tf^{n-1}(r)R_{m,l}^{t+h}(r)R_{m,l}^{t}(r)\,\mathrm{dr}.
   \end{multline}
  
  Let us show that 
  \begin{equation}\label{eq:limit one}
  \lim_{h\to 0} N(t,h)\int_0^tf^{n-1}(r)R_{m,l}^{t+h}(r)R_{m,l}^{t}(r)\,\mathrm{dr}=N(t,0).
  \end{equation}
  By the continuity of $U_h(x)$ we immediately have for $0<\epsilon<t$ the limit
  \begin{align*}
  \lim_{h\to 0}N(t,h)\int_\epsilon^tf^{n-1}(r)R_{m,l}^{t+h}(r)R_{m,l}^{t}(r)\,\mathrm{dr}&=N(t,0)\int_\epsilon^{t} f^{n-1}(r)R_{m,l}^{t}(r)^2\,\mathrm{dr}.
  \end{align*}
  For the limit
  \[\lim_{h\to 0} N(t,h)\int_0^\epsilon f^{n-1}(r)R_{m,l}^{t+h}(r)R_{m,l}^{t}(r)\,\mathrm{dr}\]
  we will show that the integrand $f^{n-1}(r)R_{m,l}^{t+h}(r)R_{m,l}^{t}(r)$ is uniformly bounded. In the case that $m>1$ we have that $R_{m,l}^{t+h}(r)$, $f^{n-1}(r)$, and $R_{m,l}^{t}(r)$ are increasing functions for small $r$. To be more precise, fix $r_0$ such that for all $r<r_0$ and all $h\in (-c,c)$ we have that $f'(r)>0$ and 
  \[\min_{h\in (-c,c)}\lambda_{m,l}(t+h)-\frac{m(m+n-2)}{f^2(r)}<0.\]
  Then 
  \[f^{n-1}(r)R_{m,l}^t(r)R_{m,l}^{t+h}(r)\le f^{n-1}(r_0)\max_{h\in (-c,c)}(R_{m,l}^{t+h}(r_0))^2. \]
  To get this result, notice that \eqref{eq:radial part_spherical} implies that $(R^{t+h}_{m,l})'(r)$ can not be zero as long as  
  \[\lambda(t+h)-\frac{m(m+n-2)}{f^2(r)}<0,\]
  by the second derivative test.
  Choosing $\epsilon<r_0$ we get by the dominated convergence theorem that 
    \begin{align*}
  \lim_{h\to 0}N(t,h)\int_0^\epsilon f^{n-1}(r)R_{m,l}^{t+h}(r)R_{m,l}^{t}(r)\,\mathrm{dr}&=N(t,0)\int_0^\epsilon f^{n-1}(r)R_{m,l}^{t}(r)^2\,\mathrm{dr}
  .
  \end{align*}

  In the case that $m=0$, we fix $r_0$ such that for all $r<r_0$ we have that $R^{t}_{0,l}(r)>0$ and $f'(r)>0$. Then 
  \[f^{n-1}(r)R^{t+h}_{0,l}(r)R^t_{0,l}(r)\le  R^{t+h}_{0,l}(r)f^{n-1}(r)\max_{x\in [0,r_0]}R^{t}_{0,l}(x).\]
  Using integration by parts and the Cauchy-Schwarz inequality we obtain
  \begin{align*}
  f^{n-1}(r)R^{t+h}_{0,l}(r)&=f^{n-1}(r)R^{t+h}_{0,l}(r)-f^{n-1}(t+h)R^{t+h}_{0,l}(t+h)\\
  &=-\int_{r}^{t+h}(n-1)f'(s)f^{n-2}(s)R^{t+h}_{0,l}(s)\,\mathrm{ds}-\int_{r}^{t+h}f^{n-1}(s)R^{t+h}_{0,l}(s)'\,\mathrm{ds}\\
  &\le (n-1)\sqrt{\int_{r}^{t+h}f'(s)^2f^{n-3}(s)\,\mathrm{ds}\int_{0}^{t+h}R^{t+h}_{0,l}(s)^2f^{n-1}(s)\,\mathrm{ds}}\\
  \quad &+\sqrt{\int_{0}^{t+h}f^{n-1}(s)(R^{t+h}_{0,l}(s)')^2\,\mathrm{ds}\int_{r}^{t+h}f^{n-1}(s)\,\mathrm{ds}}\\
  &\le (n-1)\sqrt{\int_{r}^{t+h}f'(s)^2f^{n-3}(s)\,\mathrm{ds}}+\sqrt{\lambda_{0,l}(t+h)\int_{r}^{t+h}f^{n-1}(s)\,\mathrm{ds}}.
  \end{align*}
  Hence we can use the dominated convergence theorem once more and obtain \eqref{eq:limit one}.
  
  Set \[v(t,h)=\frac{\sqrt{t/(t+h)}}{f^{n-1}(t+h)(R_{m,l}^{t+h})'(t+h)}.\]
  Rewriting \[U_{h,1}(t^2/(t+h))=v(t,h)R_{m,l}^{t+h}(t)\] gives
  \begin{align*}
  v(t,h)R_{m,l}^{t+h}(t)&=v(t,h)R_{m,l}^{t+h}(t)-v(t,h)R_{m,l}^{t+h}(t+h)\\\nonumber
  &=-\int_t^{t+h}\frac{1}{f^{n-1}(r)}(v(t,h)f^{n-1}(r)(R_{m,l}^{t+h})'(r))\,\mathrm{dr}\\\nonumber
  &=\int_t^{t+h}\frac{v(t,h)f^{n-1}(r)(R_{m,l}^{t})'(r)-v(t,0)f^{n-1}(r)(R_{m,l}^{t+h})'(r)}{f^{n-1}(r)}\,\mathrm{dr}\\\nonumber
  &\qquad -\int_t^{t+h}\frac{1}{f^{n-1}(r)}(v(t,0)f^{n-1}(r)(R_{m,l}^{t})'(r))\,\mathrm{dr}.\nonumber
  \end{align*}
If we insert the above equation into \eqref{eq:rew1} and divide by $h$ we get
\begin{align*}
   &\frac{\lambda_{m,l}(t+h)-\lambda_{m,l}(t)}{h}N(t,h)\int_0^tf^{n-1}(r)R_{m,l}^{t+h}(r)R_{m,l}^{t}(r)\,\mathrm{dr}\\
  &=-v(t,0)f^{n-1}(t)(R_{m,l}^{t})'(t)\Big(\frac{1}{h}\int_t^{t+h}\frac{1}{f^{n-1}(r)}(v(t,0)f^{n-1}(r)(R_{m,l}^{t})'(r))\,\mathrm{dr} \\
  &\quad-\frac{1}{h}\int_t^{t+h}\frac{v(t,h)f^{n-1}(r)(R_{m,l}^{t})'(r)-v(t,0)f^{n-1}(r)(R_{m,l}^{t+h})'(r)}{f^{n-1}(r)}\,\mathrm{dr}\Big).
\end{align*}
Taking the limit as $h\to 0$ and using the continuity of $U_{h,2}$ gives the result.
\end{proof}
Using Proposition \ref{lem:vari} we get the following corollary.  
\begin{corollary}\label{D:cor:1}
   Let $u^t(r,\theta)=R_{m,l}^t(r)\Theta(\theta)$ be a solution to \eqref{D:hh} on the ball $B_t(p)$ where \[\Delta_{\mathbb{S}_1^{n-1}} \Theta=-m(m+n-2)\Theta,\] 
   and $R_{m,l}^t$ satisfies \eqref{eq:radial part_spherical} with $R_{m,l}^t(t)=0$ and $R_{m,l}(0)$ being bounded. 
   Denote by 
   \begin{align*}
      F(r)&=\frac{n-1}{f^3(r)}\Big((3-n)rf'(r)^3+(rf'''(r)+2f''(r))f^2(r)\\
      &\qquad+((n-4)rf''(r)+(n-3)f'(r))f'(r)f(r)\Big).
   \end{align*}
    Then for $0 < r_{0} < r_1 <\rho$ we have
   \begin{align*}
   \lambda_{m,l}(r_1)&=\frac{1}{2r_1^2}\int_{r_0}^{r_1}t\int_{0}^tR_{m,l}^t(r)^{2}\left(F(r)+\frac{4m(m-2+n)}{f^2(r)}\left(1 -r\frac{f'(r)}{f(r)}\right)\right)f^{n-1}(r)\,\mathrm{dr}\, \mathrm{dt}\\
   &\quad +\frac{1}{r_1^2}\lim_{s\to r_0}s^2\lambda_{m,l}(s).
   \end{align*}
   In particular taking the limit of $r_0\to 0$ one has that 
   \begin{align*}
   \lambda_{m,l}(r_1)&=\frac{1}{2r_1^2}\int_{0}^{r_1}t\int_{0}^tR_{m,l}^t(r)^{2}\left(F(r)+\frac{4m(m-2+n)}{f^2(r)}\left(1 -r\frac{f'(r)}{f(r)}\right)\right)f^{n-1}(r)\,\mathrm{dr}\,\mathrm{dt}\\
   &\quad +\frac{(j_{m+n/2-1}^l)^2}{r_1^2}.
   \end{align*}
\end{corollary}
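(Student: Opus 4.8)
The plan is to specialize the general Hadamard-type identity of Proposition \ref{lem:vari} to the spherically symmetric metric $\mathbf{g}=dr\otimes dr+f^2(r)\mathbf{g}_{\mathbb{S}^{n-1}}$ and to the separated eigenfunctions $u^t(r,\theta)=R_{m,l}^t(r)\Theta(\theta)$, computing every geometric quantity in the integrand explicitly in terms of $f$. The differentiability of $\lambda_{m,l}(t)$ needed to invoke Proposition \ref{lem:vari} is supplied by Theorem \ref{lem:der}. The first quantity to record is the mean-curvature term: since $r_p$ is the radial coordinate, applying the Laplacian written in geodesic spherical coordinates gives $\Delta r_p=(n-1)f'(r)/f(r)$, so that $r_p\Delta r_p=(n-1)rf'(r)/f(r)$.

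Next I would compute the coefficient of $(u^t)^2$, namely $\tfrac12\Delta(r_p\Delta r_p)$. Writing $h(r)=(n-1)rf'/f$, which is radial, the Laplacian reduces to the one-dimensional operator $\Delta h=h''+(n-1)(f'/f)h'$. Carrying out this differentiation and collecting terms over the common denominator $f^3$ yields exactly
\[
\Delta(r_p\Delta r_p)=\frac{n-1}{f^3}\Big((3-n)r(f')^3+(rf'''+2f'')f^2+((n-4)rf''+(n-3)f')f'f\Big)=F(r).
\]
This bookkeeping is the most error-prone step, but it is purely mechanical.

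For the gradient terms I would use two standard facts about the spherically symmetric metric: the induced metric on the geodesic sphere $S^{n-1}_r(p)$ is $f^2(r)\mathbf{g}_{\mathbb{S}^{n-1}}$, and the Hessian of the distance function is $\nabla^2 r_p=\tfrac{f'}{f}(\mathbf{g}-dr\otimes dr)$. Since $u^t$ is constant in $r$ along $S_r$, the tangential gradient satisfies $|\grad_{S^{n-1}_r(p)}u^t|^2=\tfrac{R^2}{f^2}|\grad_{\mathbb{S}^{n-1}_1}\Theta|^2$, while the Hessian annihilates the radial direction, giving $\nabla^2 r_p(\grad u^t,\grad u^t)=\tfrac{f'}{f}\tfrac{R^2}{f^2}|\grad_{\mathbb{S}^{n-1}_1}\Theta|^2$. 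Combining,
\[
2\big|\grad_{S^{n-1}_r(p)}u^t\big|^2-2r_p\nabla^2 r_p(\grad u^t,\grad u^t)=\frac{2R^2}{f^2}\Big(1-r\frac{f'}{f}\Big)\big|\grad_{\mathbb{S}^{n-1}_1}\Theta\big|^2.
\]
Integrating over $\mathbb{S}^{n-1}_1$ with the normalizations $\int_{\mathbb{S}^{n-1}_1}\Theta^2\dS=1$ and, by integration by parts, $\int_{\mathbb{S}^{n-1}_1}|\grad_{\mathbb{S}^{n-1}_1}\Theta|^2\dS=m(m+n-2)$, together with $\dvol=f^{n-1}\,dr\,\dS$, turns the inner integral of Proposition \ref{lem:vari} into $\int_0^t R_{m,l}^t(r)^2\big(\tfrac12 F(r)+\tfrac{2m(m+n-2)}{f^2}(1-rf'/f)\big)f^{n-1}\,dr$.

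Finally I would divide the identity of Proposition \ref{lem:vari} by $r_1^2$ and factor $\tfrac12$ out of the bracket, which produces the stated formula with the boundary term $\tfrac{1}{r_1^2}\lim_{s\to r_0}s^2\lambda_{m,l}(s)$; the second formula then follows by letting $r_0\to 0$ and inserting $\lim_{s\to 0}s^2\lambda_{m,l}(s)=(j^l_{m+n/2-1})^2$. The main obstacle is organizational rather than conceptual: assembling the explicit Hessian, tangential-gradient and $\Delta(r_p\Delta r_p)$ computations so that the numerical factors (the overall $\tfrac12$ versus the $4m(m+n-2)$ inside) line up correctly with the normalization conventions for $R_{m,l}^t$ and $\Theta$.
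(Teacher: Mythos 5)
Your proposal is correct and follows essentially the same route as the paper: specialize Proposition \ref{lem:vari} to the metric $dr\otimes dr+f^2(r)\mathbf{g}_{\mathbb{S}^{n-1}}$, compute $\Delta(r_p\Delta r_p)=F(r)$ via the radial part of the Laplacian, and convert the tangential-gradient and Hessian terms using $\Delta_{S^{n-1}_s}=f^{-2}(s)\Delta_{\mathbb{S}^{n-1}_1}$ together with $\nabla^2 r_p=\tfrac{f'}{f}(\mathbf{g}-dr\otimes dr)$; your bookkeeping of the factors $\tfrac12$ and $4m(m+n-2)$ checks out. You are in fact slightly more explicit than the paper (which leaves the Hessian term and the appeal to Theorem \ref{lem:der} for differentiability implicit), but the argument is the same.
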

\begin{remark}
   For the first eigenvalue, i.e.~$m=0$ and $l=0$, the corollary was shown in \cite[Lem.~3.1]{BF17}.
\end{remark}
\begin{proof}
Using Proposition \ref{lem:vari} we need to compute $\Delta(r_p\Delta r_p)$. Notice first that \[\Delta r_p = (n-1)\frac{f'(r)}{f(r)}.\] Moreover, the radial part of the Laplacian is $\partial_r^2+(n-1)\frac{f'(r)}{f(r)}\partial_r$.
Hence we get that 
\[\Delta(r_p\Delta r_p)=\left(\partial_r^2+(n-1)\frac{f'(r)}{f(r)}\partial_r\right)\frac{(n-1)rf'(r)}{f(r)}=F(r).\]
Using the divergence theorem with $s<t$ we get that 
\[\int_{S^{n-1}_s}|\grad_{S^{n-1}_s} u^t|^2\dS=\frac{m(m+n-2)}{f^2(s)}\int_{S^{n-1}_s}(u^t)^2\dS,\]
where we have used that $\Delta_{S_s^{n-1}}=\frac{1}{f^2(s)}\Delta_{\mathbb{S}^{n-1}_1}$. Thus the result follows.
\end{proof}

\section{Second Proof of Theorem \ref{thm:radial:model:space}}\label{ex:model_eigenvalues}
As stated in the introduction of the article, we can use Corollary \ref{D:cor:1} to give a new proof of Theorem \ref{thm:radial:model:space}. 
\begin{proof}[Proof of Thm.~\ref{thm:radial:model:space}]
   In the case that $(M_K,\mathbf{g}_K)$ is a model space we have that $f(r)=\sin_K(r)$. Hence the function $F$ in Corollary \ref{D:cor:1} simplifies to 
   \[F(r)=(-K)(n-1)^2+(n-1)(n-3)\frac{\sin_K(r)-r\cos_K(r)}{\sin_{K}^3(r)}.\]
   When $K>0$ we will assume that $r_1<\frac{\pi}{\sqrt{K}}$.
   The function
   \[G(r) = \frac{\sin_K(r)-r\cos_K(r)}{\sin_{K}^3(r)}\]
   is increasing and satisfies $\lim_{r\to 0}G(r) = \frac{-K}{3}$.
   Thus for all $r\in (0,t]$ we have
   \[\frac{-K}{3}\le G(r)\le \frac{\sin_K(t)-t\cos_K(t)}{\sin_{K}^3(t)}.\]

   We will assume that $\lambda_{m,l}(t)$ is the $l$'th eigenvalue of $R_{m,l}^t$ and that $u^t(r,\theta)=R_{m,l}^t(r)\Theta_m(\theta)$ where $u^t$ is normalized in $L^2(B_t(p))$ and $R_{m,l}^t$ satisfies \eqref{eq:radial part_spherical} and $\Theta_m$ is a spherical harmonic function with eigenvalue $m(m+n-2)$.
   When $n\ge  3$ or $m\ge 1$ we get that  
   \begin{align*}
   \lambda_{m,l}(r_1)&\le\frac{(j^l_{m+n/2-1})^2}{r_1^2}-K\frac{(n-1)^2}{2r_1^2}\int_0^{r_1}t \, \mathrm{dt}\\
   &\qquad+\frac{(n-1)(n-3)+4m(m+n-2)}{2r_1^2}\int_{0}^{r_1}\left[\frac{t^2}{\sin_K^2(t)}\right]'\mathrm{dt}\\
   &=\frac{(j^l_{m+n/2-1})^2}{r_1^2}-K\frac{(n-1)^2}{4}+\frac{4(m + (n-2)/2)^2-1}{4}\left( \frac{1}{\sin_{K}^2(r_1)}-\frac{1}{r_1^2} \right).
   \end{align*}
    This is the same inequality as in Theorem \ref{thm:radial:model:space}.
   Using the lower bound of $G(t)$ we obtain
   \begin{equation*}
   \lambda_{m,l}(r_1)\ge\frac{(j^l_{m+n/2-1})^2}{r_1^2}+\frac{2m^2+2m(n-2)-n(n-1)}{6}K.
   \end{equation*}
   
   For $n=2$ and $m=0$ we have that
   \[-K-\frac{\sin_K(t)-t\cos_K(t)}{\sin_{K}^3(t)}\le F(r)=-K-\frac{\sin_K(r)-r\cos_K(r)}{\sin_{K}^3(r)}\le -\frac{2K}{3}.\] In this case, we have that the eigenvalue $\lambda_{0,l}(r_{1})$ satisfies
   \begin{align*}
   \lambda_{0,l}(r_1)&=\frac{1}{2r_1^2}\int_{0}^{r_1}t\int_{0}^tR_{0,l}^t(r)^{2}F(r)\sin_K(r)\,\mathrm{dr}\,\mathrm{dt}+\frac{(j_{0}^l)^2}{r_1^2}.
   \end{align*}
   Using the upper bound on $F$ we get
   \[\lambda_{0,l}(r_1)\le \frac{-K}{6}+\frac{(j_0^l)^2}{r_1^2}.\]
   For the lower bound on $F$ we have
   \[\frac{-K}{4}+\frac{1}{4}\left(\frac{1}{r_1^2}-\frac{1}{\sin_K^2(r_1)}\right)\le \lambda_{0,l}(r_1).\qedhere\]
\end{proof}

\bibliographystyle{abbrv}
\bibliography{references}

\begin{thebibliography}{10}

\bibitem{Ar16}
S.~Artamoshin.
\newblock Lower bounds for the first {D}irichlet eigenvalue of the {L}aplacian
  for domains in hyperbolic space.
\newblock {\em Mathematical Proceedings of the Cambridge Philosophical
  Society}, 160(2), 2016.

\bibitem{Ba90}
F.~E. Baginski.
\newblock Upper and lower bounds for eigenvalues of the {L}aplacian on a
  spherical cap.
\newblock {\em Quarterly of Applied Mathematics}, 48(3), 1990.

\bibitem{Ba91}
F.~E. Baginski.
\newblock Errata: {U}pper and lower bounds for eigenvalues of the {L}aplacian
  on a spherical cap.
\newblock {\em Quarterly of Applied Mathematics}, 49(2), 1991.

\bibitem{BF17}
D.~Borisov and P.~Freitas.
\newblock The spectrum of geodesic balls on spherically symmetric manifolds.
\newblock {\em Communications in Analysis and Geometry}, 25(3), 2017.

\bibitem{Ch84}
I.~Chavel.
\newblock {\em Eigenvalues in {R}iemannian geometry}.
\newblock Academic Press, 1984.
\newblock Including a chapter by B. Randol, With an appendix by J. Dodziuk.

\bibitem{Ch06}
I.~Chavel.
\newblock {\em Riemannian geometry}.
\newblock Cambridge University Press, second edition, 2006.

\bibitem{Ch75}
S.~Y. Cheng.
\newblock Eigenvalue comparison theorems and its geometric applications.
\newblock {\em Mathematische Zeitschrift}, 143(3), 1975.

\bibitem{EI07}
A.~El~Soufi and S.~Ilias.
\newblock Domain deformations and eigenvalues of the {D}irichlet {L}aplacian in
  a {R}iemannian manifold.
\newblock {\em Illinois Journal of Mathematics}, 51(2), 2007.

\bibitem{Hi05}
D.~Hinton.
\newblock Sturm's 1836 oscillation results evolution of the theory.
\newblock In {\em Sturm-{L}iouville theory}, pages 1--27. Birkh\"{a}user,
  Basel, 2005.

\bibitem{KLP21}
M.~Keller, S.~Liu, and N.~Peyerimhoff.
\newblock A note on eigenvalue bounds for non-compact manifolds.
\newblock {\em Mathematische Nachrichten}, 294(6), 2021.

\bibitem{KZ96}
Q.~Kong and A.~Zettl.
\newblock Dependence of eigenvalues of {S}turm-{L}iouville problems on the
  boundary.
\newblock {\em Journal of Differential Equations}, 126(2), 1996.

\bibitem{Pe16}
P.~Petersen.
\newblock {\em Riemannian geometry}.
\newblock Springer, Cham, third edition, 2016.

\bibitem{Si17}
G.~F. Simmons.
\newblock {\em Differential equations with applications and historical notes}.
\newblock Textbooks in Mathematics. CRC Press, 2017.
\newblock Third edition.

\bibitem{Ur17}
H.~Urakawa.
\newblock {\em Spectral geometry of the {L}aplacian}.
\newblock World Scientific Publishing Co., 2017.
\newblock Spectral analysis and differential geometry of the Laplacian.

\bibitem{We87}
J.~Weidmann.
\newblock {\em Spectral theory of ordinary differential operators}.
\newblock Springer-Verlag, Berlin, 1987.

\end{thebibliography}
\end{document}